\renewcommand\subsubsection{\@startsection{subsubsection}{3}{\z@}%
                                     {-3.25ex\@plus -1ex \@minus -.2ex}%
                                     {-0.5em}
                                     {\normalfont\normalsize\bfseries}}
\theoremstyle{plain}
\newtheorem{Thm}{Theorem}
\newtheorem{Prop}[Thm]{Proposition}
\newtheorem{Lem}[Thm]{Lemma}
\theoremstyle{definition}
\newtheorem{Rem}[Thm]{Remark}
\newtheorem{Defn}[Thm]{Definition}
\newtheorem{Eg}[Thm]{Example}
\newtheoremstyle{named}{}{}{}{}{\bfseries}{.}{.5em}{\thmnote{#3}#1}
\theoremstyle{named}
\DeclareMathOperator{\ad}{ad}
\DeclareMathOperator{\Aut}{Aut}
\DeclareMathOperator{\bcdot}{\!\cdot\!}
\DeclareMathOperator{\Ch}{Ch}
\DeclareMathOperator{\diag}{diag}
\DeclareMathOperator{\gl}{\mathfrak{gl}}
\DeclareMathOperator{\GL}{GL}
\DeclareMathOperator{\Hom}{Hom}
\DeclareMathOperator{\Id}{Id}
\DeclareMathOperator{\Ima}{Im}
\DeclareMathOperator{\Inn}{Inn}
\DeclareMathOperator{\isom}{\!\!\smash{\begin{array}{c}\sim\\[-1em]
\rightarrow\end{array}}\!\!}
\DeclareMathOperator{\lisom}{\!\!\smash{\begin{array}{c}\sim\\[-1em]
\longrightarrow\end{array}}\!\!}
\DeclareMathOperator{\kk}{\mathds{k}}
\DeclareMathOperator{\Out}{Out}
\DeclareMathOperator{\PGL}{PGL}
\DeclareMathOperator{\Rep}{Rep}
\DeclareMathOperator{\SL}{SL}
\DeclareMathOperator{\Stab}{Stab}
\DeclareMathOperator{\tr}{Tr}
\DeclareMathOperator{\ud}{\mathrm{d}\!}
\DeclareMathOperator{\lb}{\!<\!}
\DeclareMathOperator{\rb}{\!>\!}
\DeclareMathOperator{\ds}{\!/\mkern-2mu/\mkern-2mu}
\newcommand{\mc}{\multicolumn{1}{c}}
\newcommand*{\rom}[1]{\expandafter\@slowromancap\romannumeral #1@}
\let\orgdescriptionlabel\descriptionlabel
\renewcommand*{\descriptionlabel}[1]{%
  \let\orglabel\label
  \let\label\@gobble
  \phantomsection
  \edef\@currentlabel{#1}%
  \let\label\orglabel
  \orgdescriptionlabel{#1}%
}
\title{On Character Varieties with Non-Connected Structure Groups}
\author{Cheng Shu}
\colorlet{ivory}{Apricot!30!}
\colorlet{space}{black!85!}
\definecolor{bgc}{RGB}{29, 44, 46}
\definecolor{txt}{RGB}{223, 222, 189}
\definecolor{cmd}{RGB}{206, 151, 88}
\begin{document}
\let\bs\boldsymbol
\maketitle
\begin{abstract}
For any connected complex reductive group $G$, any finitely generated discrete group $\Pi$ and a normal subgroup $\tilde{\Pi}$ with quotient group $\Gamma$, we study the associated $G\rtimes\Gamma$-character variety, the space of admissible $G\rtimes\Gamma$-representations of $\Pi$. We study the relation between this variety and the $\Gamma$-fixed points in the usual $G$-character variety associated to $\tilde{\Pi}$. In the process, we give the classification of the isomorphism classes of the semi-direct products $G\rtimes\Gamma$, with fixed $G$ and $\Gamma$. In the case where $\tilde{\Pi}$ and $\Pi$ are fundamental groups of Riemann surfaces, a genericity condition on the conjugacy classes of monodromy is introduced as a sufficient condition for the irreducibility of $G\rtimes\Gamma$-representations. The example of $\GL_n\rtimes\lb\sigma\rb~$-character varieties is discussed in detail.
\end{abstract}
\setcounter{tocdepth}{1}
\tableofcontents
\addtocontents{toc}{\protect\setcounter{tocdepth}{-1}}
\section*{Introduction}
Given a connected complex reductive group $G$ and a Riemann surface $X$, the Riemann-Hilbert correspondence and the non abelian Hodge correspondence identify three moduli spaces: the moduli space of Higgs $G$-bundles on $X$, the moduli space of flat $G$-connections on $X$, and the $G$-character variety of $X$. (This picture generalises to the case of punctured Riemann surfaces by introducing restrictions on monodromies around the punctures.) We will call $G$ the structure group of the character variety. We may regard $G$-bundles as torsors under the constant group scheme $G\times X$ and generalise the situation by considering a non constant (i.e. non-split) group scheme on $X$. For example,  torsors under a unitary group scheme on $X$ as in \cite{LN}. In this more general setting, the corresponding character varieties should have a structure group of the form $G\rtimes \Gamma$, where $\Gamma$ is the Galois group of a finite Galois covering $\tilde{X}/X$ such that the non constant group scheme lifts to a constant one. (A complete classification of the isomorphism classes of the groups of the form $G\rtimes\Gamma$ with given $G$ and $\Gamma$ is given by Theorem \ref{para-GGamma}, which may be of independent interest. See \cite[Theorem 5.1]{BE} for a theorem of Taunt in the context of finite groups.) The definition of such a character variety is simple, as is explained below.

Fix a homomorphism $\psi:\Gamma\rightarrow\Aut G$ so that we have a semi-direct product $G\rtimes\Gamma$. The representation variety consists of those homomorphisms $\pi_1(X)\rightarrow G\rtimes\Gamma$ that make (the right hand side of) the following diagram commute, and these homomorphisms will be called \textit{admissible} $G\rtimes\Gamma$-representations.  
\begin{center}
\begin{tikzpicture}[node distance=2cm, auto]
  \node (A) {$1$};
  \node [right of=A] (B) {$\pi_1(\tilde{X})$};
  \node [right of=B] (C) {$\pi_1(X)$};
  \node [right of=C] (D) {$\Gamma$};
  \node [right of=D] (E) {$1$};
  
  \node [below of=A, node distance= 1.5cm] (A1) {$1$};
  \node [right of=A1] (B1) {$G$};
  \node [right of=B1] (C1) {$G\rtimes\Gamma$};
  \node [right of=C1] (D1) {$\Gamma$};
  \node [right of=D1] (E1) {$1$};
  
  \draw[->] (B) to node {$\tilde{\rho}$} (B1);
  \draw[->] (C) to node {$\rho$} (C1);
  \draw[->] (D) to node {$=$} (D1);
  \draw[->] (A) to node {} (B);
  \draw[->] (B) to node {} (C);
  \draw[->] (C) to node {} (D);
  \draw[->] (D) to node {} (E);
  \draw[->] (A1) to node {} (B1);
  \draw[->] (B1) to node {} (C1);
  \draw[->] (C1) to node {} (D1);
  \draw[->] (D1) to node {} (E1);
  
\end{tikzpicture}.
\end{center}
An admissible $\rho:\pi_1(X)\rightarrow G\rtimes\Gamma$ can be restricted to $\tilde{\rho}:\pi_1(\tilde{X})\rightarrow G$. We will call $\tilde{\rho}$ the \textit{underlying $G$-representation} of $\rho$. The conjugation of $G$ on $G\rtimes\Gamma$ induces an action on this variety and the corresponding categorical quotient is called the $G\rtimes\Gamma$-character variety. 

In this article, we study a couple of facets of this variety. As in the case of usual character varieties, the closed orbits and stable orbits in the representation variety consist exactly of \textit{semi-simple} representations and \textit{irreducible} representations, and the points in the character variety are in bijection with the closed orbits. The semi-simple and irreducible $G\rtimes\Gamma$-representations are defined in terms of completely reducible and irreducible subgroups of non-connected algebraic groups. The definition of $G\rtimes\Gamma$-character varieties as well as these fundamental results are given in Section \ref{GGAMMACV}. The classification of the semi-direct products $G\rtimes\Gamma$, with fixed $G$ and $\Gamma$, is given in \S \ref{GGAMMA}.
  
It is tempting to ask about the relation between an admissible $G\rtimes\Gamma$-representation $\rho$ and its underlying $G$-representation $\tilde{\rho}$. For simplicity, in the rest of this introduction we assume $G=\GL_n(\mathbb{C})$. Denote by $\Ch(\tilde{X},G)$ the usual $G$-character variety associated with $\tilde{X}$ and denote by $\Ch^{\circ}(\tilde{X},G)$ the open subvariety of irreducible $G$-representations. With the homomorphism $\psi:\Gamma\rightarrow\Aut G$ that defines the semi-direct product $G\rtimes\Gamma$, we have a $\Gamma$-action on $\Ch(\tilde{X},G)$. For the definition of this action, see (\ref{sigma-rho}) and Remark \ref{GammaAction}. We present below our first result, which concerns the relation between $G\rtimes\Gamma$-character varieties associated to $\tilde{X}/X$ and the variety $\Ch^{\circ}(\tilde{X},G)^{\Gamma}$ of $\Gamma$-fixed points. 

Denote by $\Omega(\psi)$ the composition of $\psi$ and the natural homomorphism $\Aut G\rightarrow\Out G$. The fixed-points locus $\Ch(\tilde{X},G)^{\Gamma}$ only depends on $\Omega(\psi)$. The equivalence classes of those $\psi'$ such that $\Omega(\psi')=\Omega(\psi)$ are parametrised by the pointed set $H^1(\Gamma,\Inn G)$ with base point $\psi$, where $\Inn G=G/Z_G$ and $Z_G$ is the centre of $G$. An element of $H^1(\Gamma,\Inn G)$ will be represented by some $\psi'$. Let $\delta:H^1(\Gamma,\Inn G)\rightarrow H^2(\Gamma,Z_G)$ be the natural map.  

\begin{Thm}\label{B}
For any semi-simple representation $\tilde{\rho}:\tilde{\Pi}\rightarrow G$, denote by $[\tilde{\rho}]$ its $G$-orbit, i.e. the corresponding element of $\Ch(\tilde{\Pi},G)$. We have:
\begin{itemize}
\item[(i)] For each $[\tilde{\rho}]\in\Ch^{\circ}(\tilde{\Pi},G)^{\Gamma}$, there is a well-defined element $\mathfrak{c}([\tilde{\rho}])\in H^2(\Gamma,Z_G)$. This defines a partition of $\Ch^{\circ}(\tilde{\Pi},G)^{\Gamma}$;
\item[(ii)] $\tilde{\rho}$ is the underlying $G$-representation of some admissible $G\rtimes\Gamma$-representation of $\Pi$ if and only if $\mathfrak{c}([\tilde{\rho}])^{-1}=\delta(\psi')$ for some $\psi'$ such that $\Omega(\psi')=\Omega(\psi)$. In this case, the semi-direct product $G\rtimes\Gamma$ is defined by $\psi'$.
\end{itemize}
\end{Thm}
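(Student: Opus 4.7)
\noindent\emph{Proof plan.} Fix a set-theoretic section $s:\Gamma\to\pi_1(X)$ of the extension $1\to\pi_1(\tilde X)\to\pi_1(X)\to\Gamma\to 1$, and set $\omega(\sigma,\tau):=s(\sigma)s(\tau)s(\sigma\tau)^{-1}\in\pi_1(\tilde X)$ together with $\alpha_\sigma(\gamma):=s(\sigma)\gamma s(\sigma)^{-1}$; the $\Gamma$-action on $\Ch(\tilde X,G)$ induced by $\psi$ then reads $(\sigma\cdot\tilde\rho)(\gamma)=\psi(\sigma)\bigl(\tilde\rho(\alpha_\sigma^{-1}\gamma)\bigr)$. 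For \textbf{(i)}, Schur's lemma applied to the irreducible $\tilde\rho$ yields, for each $\sigma\in\Gamma$, an element $h_\sigma\in G$ unique modulo $Z_G$ such that
\[
\tilde\rho(\alpha_\sigma\gamma)=h_\sigma\,\psi(\sigma)(\tilde\rho(\gamma))\,h_\sigma^{-1}\qquad(\gamma\in\pi_1(\tilde X)).
\]
Computing $\tilde\rho(\alpha_\sigma\alpha_\tau\gamma)$ in two ways, once by iteration and once via $\alpha_\sigma\alpha_\tau=\mathrm{Ad}(\omega(\sigma,\tau))\alpha_{\sigma\tau}$, shows that
\[
c(\sigma,\tau):=h_{\sigma\tau}^{-1}\,\tilde\rho(\omega(\sigma,\tau))^{-1}\,h_\sigma\,\psi(\sigma)(h_\tau)
\]
commutes with the irreducible representation $\psi(\sigma\tau)\circ\tilde\rho$, hence lies in $Z_G$. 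The $2$-cocycle identity for $\omega$ together with associativity in $\pi_1(X)$ then forces $c$ to be a $2$-cocycle for $Z_G$ with its $\Gamma$-action induced by $\Psi=\Omega(\psi)$, an action which makes sense precisely because inner automorphisms fix $Z_G$. Modifying each $h_\sigma$ by $z_\sigma\in Z_G$ alters $c$ by the coboundary $z_\sigma\psi(\sigma)(z_\tau)z_{\sigma\tau}^{-1}$, and a parallel check handles the choice of $s$; hence $\mathfrak{c}([\tilde\rho]):=[c]\in H^2(\Gamma,Z_G)$ is well-defined, and its level sets partition $\Ch^\circ(\tilde X,G)^\Gamma$.

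For \textbf{(ii)}, if $\tilde\rho$ is the underlying representation of some $\rho:\pi_1(X)\to G\rtimes\Gamma$ built with $\psi'$ satisfying $\Omega(\psi')=\Psi$, then writing $\rho(s(\sigma))=h_\sigma\sigma$ and expanding $\rho(s(\sigma))\rho(s(\tau))=\rho(\omega(\sigma,\tau))\rho(s(\sigma\tau))$ yields exactly $h_\sigma\,\psi'(\sigma)(h_\tau)=\tilde\rho(\omega(\sigma,\tau))h_{\sigma\tau}$, i.e.\ the obstruction cocycle built using $\psi'$ is trivial; conversely any such trivialisation assembles into a genuine $\rho$. It therefore suffices to compare the obstruction classes associated to $\psi$ and to $\psi'$. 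Writing $\psi'(\sigma)=\mathrm{Ad}(g_\sigma)\psi(\sigma)$, the cochain $g:\Gamma\to G$ is well-defined modulo $Z_G$ and represents the class in $H^1(\Gamma,G_{\mathrm{ad}})$ that parametrises $\psi'$ relative to $\psi$; the composition law for $\psi'$ forces $z(\sigma,\tau):=g_\sigma\,\psi(\sigma)(g_\tau)\,g_{\sigma\tau}^{-1}\in Z_G$, and $[z]=\delta(\psi')$ by the very definition of the connecting map. Substituting $h_\sigma=h'_\sigma g_\sigma$ translates between the $(\psi,h_\bullet)$- and $(\psi',h'_\bullet)$-equations, and a short calculation — made painless by centrality of $c$ and $z$ — relates the two obstruction cocycles by a central multiple of $z$. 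The announced equivalence $[c']=0\Leftrightarrow\mathfrak{c}([\tilde\rho])=\delta(\psi')^{-1}$ follows.

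\noindent\emph{Main obstacle.} The substantive difficulty is combinatorial bookkeeping: verifying well-definedness of $\mathfrak{c}$ under every ambiguity (the choice of $h_\sigma$ mod $Z_G$, the section $s$, the lift $g$ of the class parametrising $\psi'$), and pinning down conventions tightly enough that the equivalence in (ii) emerges with the sign $\delta(\psi')^{-1}$ rather than $\delta(\psi')$. The only genuinely conceptual input, beyond Schur's lemma and the definition of the connecting homomorphism, is identifying the $\Gamma$-action on $Z_G$ implicit in $H^2(\Gamma,Z_G)$ with the one coming from $\Psi$ — which is legitimate precisely because inner automorphisms act trivially on the centre.
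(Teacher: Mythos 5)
Your proposal is correct and follows essentially the same route as the paper: the class $\mathfrak{c}$ is defined exactly as your obstruction cocycle $c(\sigma,\tau)$ (the paper's $k_{\sigma\tau}=h_{\sigma\tau}^{-1}\psi_\sigma(h_\tau)h_\sigma g_{\sigma\tau}$, with $h_\sigma$ pinned down modulo $Z_G$ by strong irreducibility, i.e.\ $\Stab_G(\tilde\rho)=Z_G$, which is Schur's lemma for $\GL_n$), its well-definedness under change of $h_\ast$ and of the section is checked by the same coboundary computations, and part (ii) is obtained, as you do, by writing $\psi'_\sigma=\ad(x_\sigma)\circ\psi_\sigma$, setting $h'_\sigma=x_\sigma h_\sigma$ and observing $k'_{\sigma\tau}=d_{\sigma\tau}k_{\sigma\tau}$ with $[d]=\delta(\psi')$. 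The only content you defer --- the explicit verification of the cocycle identity and of independence from the section --- is precisely the page-long bookkeeping the paper carries out, so there is no gap in the strategy.
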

\begin{Rem}\label{Question}
We do not know whether the map $[\tilde{\rho}]\mapsto\mathfrak{c}([\tilde{\rho}])$ is surjective. We do not know whether a $G\rtimes_{\psi}\Gamma$-character variety can be isomorphic to a $G\rtimes_{\psi'}\Gamma$-character variety if $\delta(\psi)\ne\delta(\psi')$.
\end{Rem}\noindent
For the proof, see Proposition \ref{defn-c(rho)} and Proposition \ref{changepsi}. (By Lemma \ref{imageofdelta} below, the isomorphism class of $G\rtimes_{\psi'}\Gamma$ only depends on $\delta(\psi')$.) The first part of the theorem already appeared in \cite{Wu} and \cite{Sch}.

Our most important examples of such character varieties are the $\GL_n\rtimes\lb\sigma\rb~$-character varieties associated to a double covering $\tilde{X}\rightarrow X$, where $\sigma$ is an order 2 outer automorphism of $\GL_n$. These are character varieties that are unitary in the global sense. Suppose that $\tilde{X}\rightarrow X$ is exactly the unbranched part of a branched double covering $\tilde{X}'\rightarrow X'$ of compact Riemann surfaces. If we restrict the monodromy around the punctures (removed ramification points) to some $\GL_n$-conjugacy classes in the connected component $\GL_n\sigma$ denoted by $(C_j)_j$, then we obtain the character variety $\Ch_{\Gamma,\mathcal{C}}(X,G)$ as defined in \S \ref{mathbfssigma}. It can be written as
\begin{equation}\label{intro-var1}
\{(A_i,B_i)_i(Y_j)_j\in \GL_n^{2g}\times \prod_j C_j|\prod_{i=1}^g[A_i,B_i]\prod_{j}Y_j=1\}\ds\GL_n,
\end{equation}
where $g$ is the genus of $X$. Note that there are necessarily an even number of ramification points in the branched covering $\tilde{X}'\rightarrow X'$ so that $\prod_jY_j$ indeed lies in the identity component $\GL_n$. We may also start with an unbranched covering $\tilde{X}'\rightarrow X'$ and define $\tilde{X}\rightarrow X$ by removing a finite set. In this case, we can restrict the monodromy around the punctures to some conjugacy classes of $\GL_n$, denoted by $(C_j)_j$. Now the character variety $\Ch_{\Gamma,\mathcal{C}}(X,G)$ can be written as
\begin{equation}\label{intro-var2}
\{(A_i,B_i)_i(X_j)_j\in \GL_n^{2g}\times \prod_{j=1} C_j\mid A_1\sigma(B_1)A_1^{-1}B_1^{-1}\prod_{i=2}^g[A_i,B_i]\prod_{j=1}X_j=1\}\ds\GL_n.
\end{equation}

Since these character varieties are also defined by a simple equation resembling the usual character variety, its E-polynomial can be calculated following the method in \cite{HLR}, which involves point-counting over finite fields. One of the major motivation of this article is to prepare some technical results for this point-counting project, especially the notion of \textit{generic conjugacy classes}. The generic condition is a sufficient condition for all elements of the character variety to be irreducible $\GL_n\rtimes\lb\sigma\rb~$-representations. Beware that this does not imply that the underlying $\GL_n(\mathbb{C})$-representations are irreducible, so working with non-connected groups is essential. Below we give an explicit description of generic tuple of conjugacy classes in the case of $\GL_n(\mathbb{C})\rtimes\lb\sigma\rb~$-character varieties. The definition for general reductive group $G$ and finite group $\Gamma$ will be given in Section \ref{GENCONJ}, which will be the most technical part of the article.

We first restrict ourselves to the case of branched covering and two semi-simple classes $(C_1,C_2)$ in $\GL_n\sigma$. According to the parametrisation of semi-simple conjugacy classes in the connected component $\GL_n(\mathbb{C})\sigma$, they have representatives of the form $$\diag(a_1,\ldots,a_N,a_N^{-1},\ldots,a_1^{-1})\sigma,\quad\diag(b_1,\ldots,b_N,b_N^{-1},\ldots,b_1^{-1})\sigma,$$ for some $a_i$, $b_i\in\mathbb{C}^{\ast}$, $1\le i\le N$. (So we have assumed $n=2N$.) Then the genericity condition says the following. For any integer $1\le l\le N$, any two subsets $\mathbf{A}$, $\mathbf{B}\subset\{1,2,\ldots,N\}$ of cardinality $l$, and any two $l$-tuples of signs $(e_i)_{i\in \mathbf{A}}$, $(e'_i)_{i\in \mathbf{B}}$, with $e_i$, $e'_i\in\{\pm 1\}$, we have 
\begin{equation}\label{intro-bg}
\prod_{i\in \mathbf{A}}a_i^{2e_i}\prod_{i\in \mathbf{B}}b_i^{2e'_i}\ne 1.
\end{equation} In the case of unbranched covering, suppose that we have two semi-simple classes $(C_1,C_2)$ in $\GL_n$ with eigenvalues $$(a_1,\ldots,a_n),\quad(b_1,\ldots,b_n).$$ Now the genericity condition says the following. For any $1\le l \le N$, any four subsets $\mathbf{A}_1,\mathbf{A}_2,\mathbf{B}_1,\mathbf{B}_2$ of $\{1,\ldots,n\}$ such that 
\begin{itemize}
\item
$|\mathbf{A}_1|=|\mathbf{A}_{2}|=|\mathbf{B}_1|=|\mathbf{B}_{2}|=l$;
\item
$\mathbf{A}_1\cap \mathbf{A}_2=\varnothing$, $\mathbf{B}_1\cap \mathbf{B}_2=\varnothing$,
\end{itemize}
 we have
\begin{equation}\label{intro-ug}
\prod_{i\in \mathbf{A}_1}a_i\prod_{i\in \mathbf{B}_1}b_i\prod_{i\in \mathbf{A}_2}a_i^{-1}\prod_{i\in \mathbf{B}_2}b_i^{-1}\ne 1.
\end{equation}
\begin{Rem}
In the unbranched case, we have a tuple of conjugacy classes of $\GL_n$, very much the same as in the case of usual $\GL_n$-character variety. However, the definitions of generic conjugacy classes in these two cases are very different. Recall that for the usual $\GL_n$-character variety, the generic condition for $(C_1,C_2)$ says: for any $1\le l < n$, any two subsets $\mathbf{A},\mathbf{B}$ of $\{1,\ldots,n\}$ such that $|\mathbf{A}|=|\mathbf{B}|=l$, we have 
\begin{equation}\label{intro-us}
\prod_{i\in \mathbf{A}}a_i\prod_{i\in \mathbf{B}}b_i\ne 1.
\end{equation}
The generic condition for one single conjugacy class is obtained by removing the $\mathbf{B}_1$ and $\mathbf{B}_2$ terms in (\ref{intro-ug}), and the $\mathbf{B}$ terms in (\ref{intro-us}).
As an example, let us consider the unbranched case where $n=2$, $g=1$, and we have a single conjugacy class $C=\{-\Id\}$. This class is generic for the usual $\GL_2$-character variety, but not for the $\GL_2\rtimes\lb\sigma\rb~$-character variety in the unbranched case. In fact,  in the $\GL_2\rtimes\lb\sigma\rb~$-character variety defined by
$$A\sigma(B)A^{-1}B^{-1}X=\Id,$$with $X=-\Id$, there exists an element defined by $A=\Id$, $B=\diag(\sqrt{-1},\sqrt{-1})$, which is obviously not an irreducible representation. For the precise definition of irreducible $\GL_n\rtimes\lb\sigma\rb~$-representation, see \S \ref{GGAMMACV}.
\end{Rem} 

In the last section, we apply our general results given in previous sections to $\GL_n\rtimes\lb\sigma\rb~$-character varieties. We will see that even in this simple case, the statements of Theorem \ref{B} are non-trivial.  We will also prove that the $\GL_n(\mathbb{C})\rtimes\lb\sigma\rb~$-character varieties with generic conjugacy classes are smooth, and give a dimension formula. Let $\Ch_{\Gamma,\mathcal{C}}(X,\GL_n)$ denote the variety defined by (\ref{intro-var1}) or (\ref{intro-var2}).
\begin{Thm}
If the tuple of conjugacy classes $\mathcal{C}=(C_j)_j$ is generic, then the variety $\Ch_{\Gamma,\mathcal{C}}(X,\GL_n)$ is smooth and its dimension is given by $$
\dim\Ch_{\Gamma,\mathcal{C}}(X,\GL_n)=(2g-2)\dim\GL_n+\sum_{\text{all }j}\dim C_j.$$
\end{Thm}

Finally, let us remark that Boalch and Yamakawa have studied the moduli space of twisted Stokes representations in \cite{BY}. In  the "tame" case, i.e. trivial Stokes data case, the Stokes representations are reduced to torsors under a non-constant local system of groups on $X$. So our character variety is the tame case of theirs.

\subsection*{Acknowledgement}
This article is part of my thesis prepared at Universit\'e de Paris. I thank Florent Schaffhauser for many helpful discussions which initiated this work. Some ideas in this article resembles \cite[\S 4]{Sch2}. I thank Fran\c cois Digne for reading an earlier version of the article and pointing out a mistake. I would like to thank Philip Boalch and Jean Michel for answering some questions, and my thesis advisor Emmanuel Letellier for pointing out a mistake. I thank the anonymous referees for many suggestions and comments that significantly improved the organisation of the article, and pointing out many typos.

\numberwithin{Thm}{section}
\numberwithin{equation}{subsection}
\addtocontents{toc}{\protect\setcounter{tocdepth}{1}}
\section{Preliminaries}
Throughout the article, the base field will be $\kk=\mathbb{C}$ unless otherwise stated. We begin by collecting some results on non-connected algebraic groups. 
\subsection{}
Let $G$ be a linear algebraic group, which is not necessarily connected. The identity component of $G$ will be denoted by $G^{\circ}$. An algebraic group $G$ is called reductive if $G^{\circ}$ is reductive. For any subgroup $H\subset G$ and subsets $X$, $Y\subset G$, we will write $C_H(X)=\{g\in H\mid gx=xg\text{ for all $x\in X$}\}$, $N_H(X)=\{g\in H\mid gX=Xg\}$ and $N_H(X,Y)=\{g\in H\mid gX=Xg,~gY=Yg\}$. If $G$ is connected and $T\subset G$ is a maximal torus, then $W_G(T)$ denotes its Weyl group defined by $T$.

A closed subgroup $P\subset G$ is parabolic if $G/P$ is complete. By \cite{Spr} Lemma 6.2.4, $P$ is parabolic in $G$ if and only if $P^{\circ}$ is parabolic in $G^{\circ}$. Any $G$ is the semi-direct product of its unipotent radical and a Levi factor. A closed subgroup of $G$ is called a Levi subgroup if it is a Levi factor of some parabolic subgroup. Given a parabolic subgroup $P^{\circ}\subset G^{\circ}$, $N_G(P^{\circ})$ is the largest parabolic subgroup of $G$ that has $P^{\circ}$ as its identity component. Note that $P^{\circ}$ itself is also a parabolic subgroup of $G$. 

\subsection{}\label{PmeetsG1}
In general, for an arbitrary parabolic subgroup $P^{\circ}$ of $G^{\circ}$, $P:=N_G(P^{\circ})$ does not necessarily meet all connected components of $G$. The connected components of $G$ that meet $P$ are determined as follows.

Let $G^1$ be a connected component of $G$ and denote by $\mathcal{P}$ the $G^{\circ}$-conjugacy class of $P^{\circ}$. Observe that the conjugation of $G^1$ on $G^{\circ}$ induces a well-defined bijection from the set of $G^{\circ}$-conjugacy classes of parabolic subgroups of $G^{\circ}$ to itself. Then $P$ meets $G^1$ if and only if some element of $G^1$ normalises $P^{\circ}$, if and only if $G^1$ leaves $\mathcal{P}$ stable. The set of conjugacy classes of parabolic subgroups of $G^{\circ}$ are in bijection with the set of subsets of vertices of the Dynkin diagram of $G^{\circ}$ (if $G^{\circ}$ is reductive). Therefore, $P$ meets $G^1$ if and only if the Dynkin subdiagram corresponding to $\mathcal{P}$ is stable under the induced action by $G^1$.  

\subsection{Quasi-Semi-Simple Elements}
An element $g\in G$ is \textit{quasi-semi-simple} if it normalises a maximal torus $T$ and a Borel subgroup $B\subset G^{\circ}$ containing $T$ (\cite[D\'efinition 1.1]{DM94}). In other words, $g$ is contained in some group of the form $N_G(T,B)$. By \cite[Theorem 7.5]{St}, semi-simple elements are quasi-semi-simple. Note that the identity component of $N_G(T,B)$ is $T$, and  $N_G(T,B)$ meets all connected components of $G$, because all Borel subgroups and all maximal tori of $G^{\circ}$ are conjugate under $G^{\circ}$. A quasi-semi-simple element $s\in G$ is called \textit{quasi-central} if there is no $g\in G^{\circ}$ such that $\dim C_G(gs)>\dim C_G(s)$ (\cite[D\'efinition 1.15]{DM94}). For characteristic reasons (\cite[Remarque 2.7]{DM94}), all unipotent elements of $G$ are contained in $G^{\circ}$ and all quasi-semi-simple elements are semi-simple.

Now assume $G/G^{\circ}$ is a cyclic group and denote by $G^1$ a connected component generating the component group. Let $\sigma\in G^1$ be a semi-simple element, and let $T$ be a $\sigma$-stable maximal torus contained in a $\sigma$-stable Borel subgroup $B\subset G^{\circ}$. Denote by $[T,\sigma]$ the commutator and by $T^{\sigma}$ the centraliser $C_T(\sigma)$. The following proposition will be frequently used.

\begin{Prop}(\cite[Proposition 1.16]{DM18})\label{DM18Prop1.16}
Semi-simple classes in $G^1$ have representatives in $(T^{\sigma})^{\circ}\sigma$. Two elements $t\sigma$, $t'\sigma\in(T^{\sigma})^{\circ}\sigma$ represent the same $G^{\circ}$-conjugacy class if and only if $t$ and $t'$, when passing to the quotient $$T/[T,\sigma]\cong(T^{\sigma})^{\circ}/[T,\sigma]\cap (T^{\sigma})^{\circ},$$ belong to the same $W^{\sigma}$-orbit, where $W^{\sigma}$ is the subgroup of $W_{G^{\circ}}(T)$ consisting of the elements fixed by $\sigma$.
\end{Prop}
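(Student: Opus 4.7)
The plan is to address the two assertions of the proposition in turn: that every quasi-semi-simple class of $G^1$ meets $(T^\sigma)^\circ\cdot\sigma$, and that within this cross-section the $G^\circ$-conjugacy classes correspond bijectively to $W^\sigma$-orbits on $T/[T,\sigma]$.

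For the existence assertion, any quasi-semi-simple $g\in G^1$ normalizes some pair $(T_g,B_g)$, and $G^\circ$-conjugacy of such pairs lets me reduce to $g\in N_G(T,B)$. Since $\sigma\in N_G(T,B)$ by hypothesis, this intersects $G^1$ in the coset $T\sigma$, giving $g=t\sigma$. Conjugating by $s\in T$ yields $(t\cdot s\sigma(s)^{-1})\sigma$, so the $T$-orbit of $t$ inside $T$ is the coset $t\cdot[T,\sigma]$. It remains to check that the morphism $(T^\sigma)^\circ\to T/[T,\sigma]$ is surjective: the target is connected as the quotient of the connected torus $T$ by the connected subgroup $[T,\sigma]$ (the image of the morphism $s\mapsto s\sigma(s)^{-1}$), and the hypothesis $\ch k\nmid|G/G^\circ|$ forces $\sigma$ to act semisimply on $\cocha(T)\otimes\mathbb{Q}$, whence both groups have the same dimension while the kernel $(T^\sigma)^\circ\cap[T,\sigma]$ is finite.

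For the parameterization, the backward direction is direct: given $w\in W^\sigma$, Steinberg's lifting theorem produces $n\in N_{G^\circ}(T)$ with $\sigma(n)=n$, so $n\sigma n^{-1}=\sigma$ and $n(t\sigma)n^{-1}=w(t)\sigma$; combined with the $[T,\sigma]$-freedom from the previous paragraph, this gives $G^\circ$-conjugacy whenever $t'\equiv w(t)\pmod{[T,\sigma]}$. Conversely, suppose $g(t\sigma)g^{-1}=t'\sigma$ with $g\in G^\circ$ and $t,t'\in(T^\sigma)^\circ$. Both $T$ and $gTg^{-1}$ are then maximal tori of $G^\circ$ normalized by $t'\sigma$, and since $C_{G^\circ}(t'\sigma)^\circ$ is connected reductive (Steinberg) with $(T^\sigma)^\circ$ as a maximal torus and acts transitively on such tori via the correspondence $S\mapsto(S^{t'\sigma})^\circ$, I modify $g$ on the left by an element of this centralizer to assume $g\in N_{G^\circ}(T)$ represents some $w\in W$. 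Rewriting $g(t\sigma)g^{-1}=w(t)\cdot g\sigma(g)^{-1}\cdot\sigma=t'\sigma$ gives $g\sigma(g)^{-1}\in T$, whose image in $W$ is $w\sigma(w)^{-1}$, forcing $w\in W^\sigma$. Writing $g=\tilde n\cdot s$ with $\tilde n$ a $\sigma$-fixed lift of $w$ and $s\in T$, one computes $g\sigma(g)^{-1}=\tilde n(s\sigma(s)^{-1})\tilde n^{-1}=w(s\sigma(s)^{-1})\in[T,\sigma]$, yielding $t'\equiv w(t)\pmod{[T,\sigma]}$, as required. The main obstacle is the repeated invocation of Steinberg's structural theorems for non-connected reductive groups (the lifting of elements of $W^\sigma$ to $\sigma$-fixed elements of $N_{G^\circ}(T)$, and the reductivity of $C_{G^\circ}(t\sigma)^\circ$ together with the description of its maximal tori), both of which rest on the running hypothesis $\ch k\nmid|G/G^\circ|$.
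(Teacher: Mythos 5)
The paper offers no proof of this statement (it is quoted from Digne--Michel), so your argument must stand on its own. Your first part (reduction to $N_G(T,B)\cap G^1=T\sigma$ and the decomposition $T=(T^{\sigma})^{\circ}\cdot[T,\sigma]$) is fine. The parametrization, however, has a genuine gap: both directions of your argument rest on the claim that every $w\in W^{\sigma}$ admits a $\sigma$-fixed representative $n\in N_{G^{\circ}}(T)$. This holds when $\sigma$ is quasi-\emph{central} (then $W^{\sigma}\cong W_{C_{G^{\circ}}(\sigma)^{\circ}}((T^{\sigma})^{\circ})$ and one lifts inside $C_{G^{\circ}}(\sigma)$), but it fails for a general quasi-semi-simple $\sigma$, and the failure has nothing to do with the hypothesis $\ch k\nmid|G/G^{\circ}|$. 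Take $G=G^{\circ}=\SL_2$ in characteristic $0$ and $\sigma=\diag(i,-i)$, a regular semi-simple (hence quasi-semi-simple) element normalising $(T,B)$. Since $\sigma\in T$, it acts trivially on $W$, so $W^{\sigma}=W$; but a $\sigma$-fixed element of $N_{G^{\circ}}(T)$ lies in $C_{G^{\circ}}(\sigma)\cap N_{G^{\circ}}(T)=T$, so the reflection has no $\sigma$-fixed lift. Concretely $[T,\sigma]=\{1\}$, the naive $W^{\sigma}$-orbits of $t=1$ and $t'=-1$ in $T/[T,\sigma]=T$ are distinct, and yet $1\cdot\sigma=\diag(i,-i)$ and $(-1)\cdot\sigma=\diag(-i,i)$ are $G^{\circ}$-conjugate. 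What your forward computation actually produces for an arbitrary lift $n_w$ of $w$ is $t'\equiv w(t)\cdot\overline{n_w\sigma(n_w)^{-1}}\pmod{[T,\sigma]}$: the assignment $w\mapsto\overline{n_w\sigma(n_w)^{-1}}\in T/[T,\sigma]$ is a well-defined cocycle (because $w\in W^{\sigma}$), and the correct statement uses the resulting twisted affine action of $W^{\sigma}$ on $T/[T,\sigma]$, which collapses to the linear one exactly when this cocycle is trivial --- e.g.\ for quasi-central $\sigma$, the only case in which the paper actually invokes the proposition (\S\ref{ssclasses}, \S\ref{mathbfssigma}). Your proof needs either to assume quasi-centrality or to carry the cocycle.

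A secondary, fixable, imprecision: $C_{G^{\circ}}(t'\sigma)^{\circ}$ does \emph{not} act transitively on the maximal tori of $G^{\circ}$ normalised by $t'\sigma$, and $S\mapsto(S^{t'\sigma})^{\circ}$ does not send every such torus to a maximal torus of the centraliser. In the example above both $T$ and $\SO_2$ are $\sigma$-stable maximal tori of $\SL_2$, they are not conjugate under $C_{G^{\circ}}(\sigma)^{\circ}=T$, and $((\SO_2)^{\sigma})^{\circ}$ is trivial. The reduction of $g$ to $N_{G^{\circ}}(T)$ should instead compare the two tori $(T^{t'\sigma})^{\circ}$ and $g(T^{t\sigma})^{\circ}g^{-1}$ inside the reductive group $C_{G^{\circ}}(t'\sigma)^{\circ}$ --- these really are maximal tori of it because $T$ and $gTg^{-1}$ are members of $t'\sigma$-stable pairs (torus, Borel) --- and then recover $T$ as $C_{G^{\circ}}((T^{\sigma})^{\circ})$ to conclude that the modified $g$ normalises $T$.
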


\subsection{Completely reducible and irreducible subgroups}\label{H(x)}
Let $G$ be a (not necessarily connected) reductive algebraic group. A closed subgroup $H$ of $G$ is \textit{$G$-completely reducible} if for any parabolic subgroup $P\subset G$ containing $H$, there is a Levi factor of $P$ containing $H$. A closed subgroup $H$ of $G$ is \textit{$G$-irreducible} if it is not contained in any subgroup of the form $N_G(P^{\circ})$ with $P^{\circ}\subset G^{\circ}$ being a proper parabolic subgroup. Clearly, if $G=G^{\circ}$, then the above definition coincides with the definitions for connected reductive groups. 
\begin{Rem}
Our definitions  of irreducible subgroup and completely reducible subgroup are slightly different from those in \cite{BMR}, which use R-parabolic subgroups and R-Levi subgroups. In this article, we only consider irreducible subgroups $H\subset G$ that meet all connected components of $G$. (These groups will appear as the image of $\rho$ in the diagram (\ref{rho-CD}).) For such a subgroup, our definitions agree with those in \cite{BMR}. An R-parabolic subgroup $P\subset G$ is always contained in $N_G(P^{\circ})$. If it meets all connected components of $G$, then it must be the entire normaliser group $N_G(P^{\circ})$. An R-Levi subgroup is just a usual Levi factor of an R-parabolic subgroup. Conversely, every subgroup of the form $N_G(P^{\circ})$ is an R-parabolic according to \cite[Proposition 2.4]{Ri88}.
\end{Rem}

Given $\mathbf{x}=(x_1,\ldots,x_n)\in G^n$, denote by $H(\mathbf{x})$ the closed algebraic subgroup of $G$ such that $H(\mathbf{x})(k)$ is the Zariski closure of the abstract group generated by $\{x_1,\ldots,x_n\}$. Consider the diagonal action of $G^{\circ}$ on $G^n$ by conjugation. Denote by $\Stab_{G^{\circ}}(\mathbf{x})$ the stabiliser. Obviously $\Stab_{G^{\circ}}(\mathbf{x})=\Stab_{G^{\circ}}(H(\mathbf{x}))$. Recall that for a $G^{\circ}$-action on an algebraic variety $X$, an orbit $G^{\circ}\bcdot x$, $x\in X$, is called \textit{stable}, if it is closed and $\Stab_{G^{\circ}}(x)/Z_X$ is finite,  where $Z_X:=\cap_{x\in X}\Stab_{G^{\circ}}(x)$.
\begin{Eg}
Suppose that $G^{\circ}=\GL_n$, and that a connected component $G^1$ of $G$ acts on $Z_{G^{\circ}}\cong\mathbb{G}_m$ by $t\mapsto t^{-1}$, so that $Z_G=\{\pm 1\}$. Let $X=G^n$, so $Z_X=Z_G$. If $H(\mathbf{x})$ is an irreducible subgroup contained in $G^{\circ}$, then $\Stab_{G^{\circ}}(\mathbf{x})=Z_{G^{\circ}}$. Therefore $\Stab_{G^{\circ}}(\mathbf{x})/Z_X$ is not finite and the orbit $G^{\circ}\cdot\mathbf{x}$ is not stable.
\end{Eg}

\begin{Thm}\label{st=irr}
Let $G$ and $\mathbf{x}$ be as above. Then the orbit $G^{\circ}\bcdot\mathbf{x}$ is closed if and only if $H(\mathbf{x})$ is completely reducible. Suppose that $H(\mathbf{x})$ meets all connected components of $G$. Then the $G^{\circ}$-orbit of $\mathbf{x}\in G^n$ is stable if and only if $H(\mathbf{x})$ is an irreducible subgroup of $G$.
\end{Thm}
\begin{proof}
The first part is \cite[Theorem 3.6]{Ri88}. Note that in \cite{Ri88}, $G$ being "linearly reductive" simply means that $G^{\circ}$ is reductive. The proof of \cite[Theorem 4.1]{Ri88} can be adapted to the current situation.
\end{proof}

\begin{Rem}
This theorem also holds in positive characteristic. The proof is a combination of \cite[\S 6.3]{BMR}, \cite[Proposition 8.3]{Ma} and \cite[Proposition 16.7]{Ri88}.
\end{Rem}

\subsection{$G\rtimes\Gamma$-Character Varieties}\label{GGAMMACV}

For any connected reductive group $G$ and a finitely generated discrete group $\Pi$, the set of homomorphisms $\Hom(\Pi,G)$ has an algebraic structure induced from that of $G$, and we define the $G$-representation variety $\Rep(\Pi,G)$ as this algebraic variety. Its elements are called $G$-representations of $\Pi$. The conjugation by $G$ on the target induces an action on $\Rep(\Pi,G)$. We will denote by $\Ch(\Pi,G)$ the resulting categorical quotient, called the $G$-character variety. 

Let $\Gamma$ be a finite quotient of $\Pi$ and fix a homomorphism $\psi:\Gamma\rightarrow\Aut G$, where $\Aut G$ is the group of automorphisms of $G$. Let $G\rtimes\Gamma$ be the semi-direct product defined by $\psi$. 
\begin{Defn}
We say that a homomorphism of groups $\rho:\Pi\rightarrow G\rtimes\Gamma$ is an \textit{admissible $G\rtimes\Gamma$-representation} of $\Pi$ if the right square of the following diagram commutes
\begin{equation}\label{rho-CD}
\begin{tikzcd}[row sep=2.5em, column sep=3em]
1 \arrow[r] & \tilde{\Pi} \arrow[r, "p"]\arrow[d, "\tilde{\rho}"] & \Pi \arrow[r]\arrow[d, "\rho"] & \Gamma \arrow[r]\arrow[d, "\Id"] & 1\\
1 \arrow[r] & G \arrow[r] & G\rtimes\Gamma \arrow[r] & \Gamma \arrow[r] & 1
\end{tikzcd}
\end{equation}
where $\tilde{\Pi}$ is the kernel and $\tilde{\rho}$ is just the restriction of $\rho$. We call $\tilde{\rho}$ the \textit{underlying $G$-representation} of $\rho$. (The left hand side of the diagram automatically commutes.)
\end{Defn}

Write $\bar{G}=G\rtimes\Gamma$. The set $\Rep_{\Gamma}(\Pi,G)$ of admissible $\bar{G}$-representations of $\Pi$ is an algebraic variety, which can be constructed as follows. Suppose that $\Pi_0$ is a free group with $n$ generators such that we have a surjective group homomorphism $\Pi_0\rightarrow\Pi$ with kernel $R$. Then each element $r\in R$ defines a closed subvariety of $\bar{G}^n$. As an example, let $\Pi$ be the fundamental group of a compact Riemann surface of genus $g$, then $n$ can be taken to be $2g$ and $R$ is generated by one single element $r=\prod_{i=1}^g[\alpha_i,\beta_i]$, where $\alpha_i$'s and $\beta_i$'s are the generators of the free group $\Pi_0$. Then $r$ defines a closed subvariety $$\{(A_i,B_i)_i\in \bar{G}^{2g}|\prod_{i=1}^g[A_i,B_i]=1\}\subset\bar{G}^{2g}.$$ The admissibility of $\bar{G}$-representations amounts to requiring the images of the generators of $\Pi$ to lie in some specific connected components of $\bar{G}$ according to (\ref{rho-CD}). In the general case, we define $\Rep_{\Gamma}(\Pi,G)$ as the intersection of all these subvarieties as $r$ runs over $R$. The conjugation by $G$ on $G\rtimes\Gamma$ induces an action of $G$ on $\Rep_{\Gamma}(\Pi,G)$, and we will denote by $\Ch_{\Gamma}(\Pi,G)$ the categorical quotient, called the $G\rtimes\Gamma$-character variety.  

Let $\rho:\Pi\rightarrow\bar{G}$ be an admissible $\bar{G}$-representation, let $\mathbf{x}$ be a tuple of elements of $\bar{G}$ which are images of a finite set of generators of $\Pi$, and let $H(\mathbf{x})$ be as defined in \S \ref{H(x)}. We say that an admissible $\rho$ is \textit{semi-simple} if $H(\mathbf{x})$ is a completely reducible subgroup of $\bar{G}$. We say that an admissible $\rho$ is \textit{irreducible} if $H(\mathbf{x})$ is an irreducible subgroup of $\bar{G}$. Let $\tilde{\rho}$ be a $G$-representation of $\tilde{\Pi}$. We say that $\tilde{\rho}$ is \textit{strongly irreducible} if $\tilde{\rho}$ is irreducible and $\Stab_{G}(\tilde{\rho})=Z_G$.

\begin{Thm}
The $G$-orbit of $\rho\in\Rep_{\Gamma}(\Pi,G)$ is closed if and only if $\rho$ is a semi-simple $\bar{G}$-representation of $\Pi$. The $G$-orbit of $\rho\in\Rep_{\Gamma}(\Pi,G)$ is stable if and only if $\rho$ is an irreducible $\bar{G}$-representation of $\Pi$.
\end{Thm}
\begin{proof}
By Theorem \ref{st=irr}, the assertions hold when $\Pi$ is a free group with $n$ generators. Since our representation variety can be realised as a closed $G$-invariant subvariety of $\bar{G}^n$ for some $n$, we are done.
\end{proof}

\subsection{The Group $G\rtimes\Gamma$.}\label{GGAMMA}
We give the classification of the semi-direct products $G\rtimes\Gamma$, with fixed $G$ and $\Gamma$. Our reference for group cohomology is \cite[Chapitre I, \S 5]{Ser}. 

Denote by $Z_G$ the centre of $G$ and denote by $\Inn G=G/Z_G$ the group of inner automorphisms, which is a normal subgroup of $\Aut G$.  Denote by $\Out(G)$ the quotient group $\Aut G/\Inn G$. Let $\Gamma$ be a discrete group. For any homomorphism $\psi:\Gamma\rightarrow\Aut G$, denote by $\Omega(\psi)$ its composition with the quotient map $\Aut G\rightarrow\Out G$.

A homomorphism $\psi:\Gamma\rightarrow\Aut G$ defines a semi-direct product $G\rtimes_{\psi}\Gamma$, which will often be written as $G\rtimes\Gamma$. Each semi-direct product is equipped with a natural section $s:\Gamma\rightarrow G\rtimes\Gamma$, which is a group homomorphism, satisfying
$$
\psi_{\sigma}(g)=s_{\sigma}gs_{\sigma}^{-1},\text{ for any $g\in G$ and $\sigma\in\Gamma$},
$$
where we write $s_{\sigma}=s(\sigma)$ and $\psi_{\sigma}=\psi(\sigma)$ for any $\sigma\in\Gamma$.

\begin{Lem}
Let $H$ be a group. Let $s:\Gamma\rightarrow G\rtimes_{\psi}\Gamma$ be the natural section. Let $\Phi:G\rtimes_{\psi}\Gamma\rightarrow H$ be a map of set. Suppose that the restriction of $\Phi$ to $G$ is a group homomorphism and that 
\begin{itemize}\label{gp-hom}
\item[-] $\Phi(gs_{\sigma})=\Phi(g)\Phi(s_{\sigma})$, for any $g\in G$ and $\sigma\in\Gamma$;
\item[-] $\Phi(s_{\sigma})\Phi(s_{\tau})=\Phi(s_{\sigma\tau})$;
\item[-] $\Phi(s_{\sigma})\Phi(g)\Phi(s_{\sigma})^{-1}=\Phi(\psi_{\sigma}(g))$, for any $\sigma$, $\tau\in\Gamma$.
\end{itemize} 
Then $\Phi$ is a group homomorphism.
\end{Lem}
\begin{proof}
Obvious.
\end{proof}

\begin{Lem}\label{f-isom}
Let $\psi\in\Hom(\Gamma,\Aut G)$. Let $\Phi_0\in\Aut G$ and let $f\in\Aut\Gamma$. Write $\tilde{\psi}=\ad\Phi_0\circ\psi\circ f$, where $\ad\Phi_0:\Aut G\isom\Aut G$ is the conjugation by $\Phi_0$. Then there is an isomorphism $\Phi:G\rtimes_{\psi}\Gamma\isom G\rtimes_{\tilde{\psi}}\Gamma$ that coincides with $f^{-1}$ modulo $G$ and restricts to $\Phi_0$ on $G$.
\end{Lem}
\begin{proof}
Let $s:\Gamma\rightarrow G\rtimes_{\psi}\Gamma$ and $\tilde{s}:\Gamma\rightarrow G\rtimes_{\tilde{\psi}}\Gamma$ be the natural sections. For any $g\in G$, we have $\tilde{s}_{\sigma}g\tilde{s}_{\sigma}^{-1}=\Phi_0\circ\psi_{\sigma}\circ\Phi_0^{-1}(g)$. For any $g\in G$ and $\sigma\in\Gamma$, define $\Phi(gs_{\sigma})=\Phi_0(g)\tilde{s}_{f^{-1}(\sigma)}$. This is obviously a bijection. It suffices to verify that the conditions of Lemma \ref{gp-hom} are satisfied.
\end{proof}

\begin{Prop}\label{prop-exist-isom}
Let $\psi$, $\psi'\in\Hom(\Gamma,\Aut G)$. Then there is an isomorphism $\Phi:G\rtimes_{\psi}\Gamma\isom G\rtimes_{\psi'}\Gamma$ if and only if there exist $f\in\Aut\Gamma$ and $\Phi_0\in\Aut G$ such that the following conditions hold. 

(i) $\Omega(\psi)=\Omega(\ad\Phi_0\circ\psi'\circ f^{-1})$.

(ii) Write $\psi''=\ad\Phi_0\circ\psi'\circ f^{-1}$. There exists an isomorphism $\Phi':G\rtimes_{\psi}\Gamma\isom G\rtimes_{\psi''}\Gamma$ satisfying:
\begin{itemize}
\item[-] $\Phi'|_G=\Id$;
\item[-] $\Phi'\equiv\Id\mod G$.
\end{itemize}
\end{Prop}
\begin{proof}
If $f$ and $\Phi_0$ exist, then $\Phi'$ composed with the isomorphism in Lemma \ref{f-isom} gives the desired isomorphism.

Conversely, suppose that there is an isomorphism $\Phi:G\rtimes_{\psi}\Gamma\isom G\rtimes_{\psi'}\Gamma$. Let $f$ be the automorphism of $\Gamma$ induced by $\Phi$ on the component group. Applying Lemma \ref{f-isom} to $f$ and $\Phi_0=\Id$, we get an isomorphism $\tilde{\Phi}':G\rtimes_{\psi'}\Gamma\isom G\rtimes_{\tilde{\psi}'}\Gamma$, where $\tilde{\psi}'=\psi\circ f$. Put $\tilde{\Phi}=\tilde{\Phi}'\circ\Phi$, then $\tilde{\Phi}$ induces the identity map on the component group. Denote by $s:\Gamma\rightarrow G\rtimes_{\psi}\Gamma$ and $\tilde{s}':G\rtimes_{\tilde{\psi}'}\Gamma$ the corresponding natural sections. For any $\sigma\in\Gamma$, we can write $\tilde{\Phi}(s_{\sigma})=g_{\sigma}\tilde{s}'_{\sigma}$ for some $g_{\sigma}\in G$. We apply $\tilde{\Phi}$ to $s_{\sigma}gs_{\sigma}^{-1}$ and get $$\tilde{\Phi}(\psi_{\sigma}(g))=g_{\sigma}\tilde{\psi}'_{\sigma}(\tilde{\Phi}(g))g^{-1}_{\sigma}$$ for any $g\in G$. This shows that $\psi_{\sigma}$ is equal to $\tilde{\Phi}^{-1}\circ\tilde{\psi}'_{\sigma}\circ\tilde{\Phi}$ up to an inner automorphism. Now let $\Phi_0$ be the automorphism of $G$ obtained by restricting $\tilde{\Phi}$ to $G$, then apply Lemma \ref{f-isom} to $\Phi_0^{-1}$ and $f=\Id$.
\end{proof}
\begin{Rem}\hfill

(i). In the proof of the above proposition we use the fact that $G$ is a connected group so that any isomorphism $\Phi$ must map $G$ to $G$.

(ii). The isomorphisms $\Phi'$ in the statement of the proposition can be characterised as follows.\footnote{I thank a referee for suggesting this to me.} Let $s'':\Gamma\rightarrow G\rtimes_{\psi''}\Gamma$ be the natural section. For any $\sigma\in\Gamma$, write $\Phi'(s_{\sigma})=h_{\sigma}^{-1}s''_{\sigma}$ for some $h_{\sigma}\in G$. Then $(h_{\sigma})_{\sigma\in\Gamma}$ defines a cocycle. Conversely, let $(h_{\sigma})_{\sigma}\in(\prod_{\sigma\in\Gamma}G)$ be a cocycle and put $\tilde{\psi}_{\sigma}:=(\ad h_{\sigma})\psi_{\sigma}$ for any $\sigma\in\Gamma$. Then $\tilde{\psi}$ is a group homomorphism from $\Gamma$ to $\Aut G$ with $\Omega(\tilde{\psi})=\Omega(\psi)$. Thus we have a semi-direct product $G\rtimes_{\tilde{\psi}}\Gamma$ with natural section $\tilde{s}:\Gamma\rightarrow G\rtimes_{\tilde{\psi}}\Gamma$. It can be checked that $\Phi_{(h_{\sigma})_{\sigma}}:G\rtimes_{\psi}\Gamma\rightarrow G\rtimes_{\tilde{\psi}}\Gamma$, which sends $gs_{\sigma}$ to $gh_{\sigma}^{-1}\tilde{s}_{\sigma}$, defines an isomorphism of groups satisfying the conditions in (ii) of the above proposition. Therefore, the isomorphisms $\Phi'$ are necessarily of the form $\Phi_{(h_{\sigma})_{\sigma}}$.
\end{Rem}

Let $\psi$ and $\psi'\in\Hom(\Gamma,\Aut G)$ be such that $\Omega(\psi')=\Omega(\psi)$. We say that $\psi$ and $\psi'$ are equivalent if there exists $g\in G$ such that $\psi'_{\sigma}=(\ad g)\psi_{\sigma}(\ad g^{-1})$ for all $\sigma\in\Gamma$, where $\ad g$ is the inner automorphism of $G$ defined by $g$.
\begin{Lem}\label{equiv-psi}
Suppose that $\Omega(\psi')=\Omega(\psi)$ and that $\psi'$ is equivalent to $\psi$. Then there is an isomorphism $\Phi:G\rtimes_{\psi}\Gamma\isom G\rtimes_{\psi'}\Gamma$ satisfying 
\begin{itemize}
\item[-] $\Phi|_G=\Id$;
\item[-] $\Phi\equiv\Id\mod G$.
\end{itemize}
\end{Lem}
\begin{proof}
Let $g\in G$ be such that $\psi'=(\ad g)\psi(\ad g^{-1})$. Let $s$ and $s'$ be the natural sections. For each $\sigma\in\Gamma$, put $g_{\sigma}=\psi_{\sigma}(g)g^{-1}$, and define $\Phi(xs_{\sigma})=xg_{\sigma}s'_{\sigma}$, for any $x\in G$ and $\sigma\in\Gamma$. This is obviously a bijection. Unwinding the definitions, we see that the conditions of Lemma \ref{gp-hom} are satisfied.
\end{proof}

\begin{Lem}\label{equiv-of-inner}
Fix $\psi\in\Hom(\Gamma,\Aut G)$. Then the equivalence classes of those $\psi'$ such that $\Omega(\psi')=\Omega(\psi)$ are parametrised by the pointed set $H^1(\Gamma,\Inn G)$ with the equivalence class of $\psi$ corresponding to the trivial cohomology class.
\end{Lem}
In what follows we will identify these equivalence classes with $H^1(\Gamma,\Inn G)$ and regard $\psi$ as the base point.
\begin{proof}
The group of inner automorphisms $\Inn G$ is a normal subgroup of $\Aut G$. For any $\sigma\in\Gamma$, $\psi_{\sigma}$ acts by conjugation on $\Inn G$. Recall that $(z_\sigma)_{\sigma\in\Gamma}\in(\Inn G)^{|\Gamma|}$ is a cocycle if $z_{\sigma}\psi_{\sigma}(z_{\tau})=z_{\sigma\tau}$ for any $\sigma$, $\tau\in\Gamma$. Two cocycles $(z_{\sigma})_{\sigma}$ and $(z'_{\sigma})_{\sigma}$ are equivalent if there exists $y\in\Inn G$ such that $z'_{\sigma}=yz_{\sigma}\psi_{\sigma}(y)^{-1}$ for all $\sigma\in\Gamma$. Then $H^1(\Gamma,\Inn G)$ is the set of equivalence classes of cocycles. It is a pointed set with the base point being the class of the trivial cocycle.

If $\Omega(\psi')=\Omega(\psi)$, then $\psi'_{\sigma}=x_{\sigma}\psi_{\sigma}$, for some $x_{\sigma}\in \Inn G$ and each $\sigma\in\Gamma$. Since both $\psi$ and $\psi'$ are group homomorphisms, we deduce that $x_{\sigma}\psi_{\sigma}(x_{\tau})=x_{\sigma\tau}$ for any $\sigma$, $\tau\in\Gamma$. Therefore $(x_{\sigma})_{\sigma}$ defines a cocycle. It is easy to check that the equivalence relation of homomorphisms translates into the equivalence relation of cocycles.
\end{proof}
By Lemma \ref{equiv-psi} and Lemma \ref{equiv-of-inner}, to each element of $H^1(\Gamma,\Inn G)$ is associated an isomorphism class of $G\rtimes\Gamma$. It remains to determine whether two elements of $H^1(\Gamma,\Inn G)$ define isomorphic semi-direct products, with an isomorphism satisfying the assumptions of Proposition \ref{prop-exist-isom} (ii).
\begin{Lem}\label{imageofdelta}
Fix $\psi\in\Hom(\Gamma,\Aut G)$ and let $H^1(\Gamma,\Inn G)$ be defined by $\psi$ so that it has $\psi$ as the base point. Let $\psi'$ and $\psi''\in\Hom(\Gamma,\Aut G)$ be such that $\Omega(\psi')=\Omega(\psi'')=\Omega(\psi)$. We denote their equivalence classes also by $\psi'$ and $\psi''$ respectively. Let $\delta:H^1(\Gamma,\Inn G)\rightarrow H^2(\Gamma,Z_G)$ be the natural map.  Then there is an isomorphism $\Phi:G\rtimes_{\psi'}\Gamma\isom G\rtimes_{\psi''}\Gamma$ satisfying
\begin{itemize}
\item[-] $\Phi|_G=\Id$;
\item[-] $\Phi\equiv\Id\mod G$.
\end{itemize}
if and only if $\delta(\psi')=\delta(\psi'')$.
\end{Lem}
\begin{proof}
Suppose that $\psi'_{\sigma}=(\ad x_{\sigma})\psi_{\sigma}$ and $\psi''_{\sigma}=(\ad y_{\sigma})\psi_{\sigma}$ for some $x_{\sigma}$, $y_{\sigma}\in G$ and each $\sigma\in\Gamma$. Then $\delta(\psi')$ is by definition the cohomology class of the cocycle $(d_{\sigma\tau})_{\sigma,\tau\in\Gamma}$, with $$d_{\sigma\tau}:=x^{-1}_{\sigma\tau}x_{\sigma}\psi_{\sigma}(x_{\tau}).$$ The equality $\delta(\psi')=\delta(\psi'')$ is equivalent to the existence of a cochain $(c_{\sigma})_{\sigma\in\Gamma}$, $c_{\sigma}\in Z_G$, such that $$x^{-1}_{\sigma\tau}x_{\sigma}\psi_{\sigma}(x_{\tau})\cdot c^{-1}_{\sigma\tau}c_{\sigma}\psi_{\sigma}(c_{\tau})=y^{-1}_{\sigma\tau}y_{\sigma}\psi_{\sigma}(y_{\tau}),$$which is equivalent to
\begin{equation}\label{eq-xyc}
y_{\sigma\tau}x^{-1}_{\sigma\tau}c^{-1}_{\sigma\tau}=(y_{\sigma}x_{\sigma}^{-1}c^{-1}_{\sigma})x_{\sigma}\psi_{\sigma}(y_{\tau}x^{-1}_{\tau}c^{-1}_{\tau})x^{-1}_{\sigma}.
\end{equation}
For each $\sigma\in\Gamma$, put $z_{\sigma}=y_{\sigma}x_{\sigma}^{-1}c^{-1}_{\sigma}\in G$. Note that $z_1$ always equals to $1$. Denote by $s'$ and $s''$ the natural sections of $G\rtimes_{\psi'}\Gamma$ and $G\rtimes_{\psi''}\Gamma$ respectively. Define a bijection $\Phi(hs'_{\sigma}):=hz^{-1}_{\sigma}s''_{\sigma}$, for any $\sigma\in\Gamma$, $h\in G$. With these definitions, the conditions of Lemma \ref{gp-hom} are satisfied:
\begin{itemize}
\item[(i)] $\Phi(s'_{\sigma})\Phi(s'_{\tau})=\Phi(s'_{\sigma\tau})$;
\item[(ii)] $\Phi(s'_{\sigma})g\Phi(s'_{\sigma})^{-1}=\psi'_{\sigma}(g)$, for any $\sigma$, $\tau\in\Gamma$.
\end{itemize} 
Thus $\Phi$ is indeed an isomorphism.

Conversely, suppose that there is an isomorphism $\Phi:G\rtimes_{\psi'}\Gamma\isom G\rtimes_{\psi''}\Gamma$ satisfying the two assumptions. Write $\Phi(s'_{\sigma})=z^{-1}_{\sigma}s''_{\sigma}$ for some $z_{\sigma}\in G$ and each $\sigma\in\Gamma$. Then condition (ii) above is equivalent to $z_{\sigma}=y_{\sigma}x_{\sigma}^{-1}c^{-1}_{\sigma}$ for some $c_{\sigma}\in Z_G$ and each $\sigma\in\Gamma$. Condition (i) above is equivalent to $$z_{\sigma}x_{\sigma}\psi_{\sigma}(z_{\tau})x_{\sigma}^{-1}=z_{\sigma\tau}.$$These two conditions combined recover (\ref{eq-xyc}).
\end{proof}

Consider the left action of $\Aut G\times\Aut\Gamma$ on $\Hom(\Gamma,\Out G)$ by $$(\Phi_0,f):\Psi\mapsto\ad\Phi_0\circ\Psi\circ f^{-1},$$ regarding $\Phi_0$ as an element of $\Out G$. Denote by $\mathbf{A}(\Gamma,G)$ the set of $\Aut G\times\Aut\Gamma$-orbits under this action. For any $\Psi\in\Hom(\Gamma,\Out G)$, denote by $\mathbb{S}(\Psi)$ the stabiliser of $\Psi$ in $\Aut G\times\Aut\Gamma$. Let $H^1(\Gamma,\Inn G)$ be defined by some $\psi$ such that $\Omega(\psi)=\Psi$. Then $\mathbb{S}(\Psi)$ acts on $H^1(\Gamma,\Inn G)$, sending $\psi'$ to $\ad\Phi_0\circ\psi'\circ f^{-1}$. By Lemma \ref{f-isom} and Lemma \ref{imageofdelta}, it descends to an action on $\Ima\delta$. Denote by $\mathbf{B}(\Gamma,G,\Psi)$ the set of $\mathbb{S}$-orbits in $\Ima\delta$.

\begin{Thm}\label{para-GGamma}
For each $a\in\mathbf{A}(\Gamma,G)$, we choose a representative $\Psi_a\in\Hom(\Gamma,\Out G)$. Then the isomorphism classes of the semi-direct products $G\rtimes\Gamma$ are parametrised by the pairs $(a,b)$, with $a\in\mathbf{A}(\Gamma,G)$ and $b\in\mathbf{B}(\Gamma,G,\Psi_a)$.
\end{Thm}
\begin{proof}
This is a combination of Proposition \ref{prop-exist-isom}, Lemma \ref{equiv-of-inner} and Lemma \ref{imageofdelta}.
\end{proof}

\section{$\Ch_{\Gamma}(\Pi,G)$ and $\Ch(\tilde{\Pi},G)^{\Gamma}$}\label{GAMMAINV}
In this section,  we investigate the relation between $\Ch_{\Gamma}(\Pi,G)$ and $\Gamma$-fixed points in $\Ch(\tilde{\Pi},G)$. 

\subsection{}
Let $\Pi$ be a finitely generated discrete group and let $p:\tilde{\Pi}\rightarrow\Pi$ be a finitely generated normal subgroup with finite index, i.e. we have the short exact sequence
$$
1\longrightarrow\tilde{\Pi}\stackrel{p}{\longrightarrow}\Pi\longrightarrow \Gamma\longrightarrow 1.
$$
We choose once and for all a section $\gamma_{\ast}:\Gamma\rightarrow\Pi$. In general, it is only a map of sets, but we can always require that $\gamma_1=1$. We will write $\gamma_{\sigma}=\gamma_{\ast}(\sigma)$ for $\sigma\in\Gamma$.

Fix a group homomorphism $\psi:\Gamma\rightarrow \Aut G$.

\begin{Defn}
For any $\tilde{\rho}\in\Rep(\tilde{\Pi},G)$, we say $\tilde{\rho}$ is \textit{pre-$(\Gamma,\psi)$-invariant} if there exists some cochain $h_{\ast}=(h_{\sigma})_{\sigma\in\Gamma}\in C^1(\Gamma,G)$ such that for any $\sigma\in\Gamma$,
\begin{equation}\label{sigma-rho}
\ad h_{\sigma}\circ\tilde{\rho}=\leftidx{^{\sigma}\!}{\tilde{\rho}}:=\psi_{\sigma}\circ\tilde{\rho}\circ C_{\sigma},
\end{equation}
where $C_{\sigma}$ is the automorphism
\begin{equation}
\begin{split}
\tilde{\Pi}&\lisom\tilde{\Pi}\\
\alpha&\longmapsto \gamma^{-1}_{\sigma}\alpha\gamma_{\sigma}.
\end{split}
\end{equation} 
In this case, we say that $(\tilde{\rho},h_{\ast})$ is a \textit{pre-$(\Gamma,\psi)$-invariant pair}.
\end{Defn}
\begin{Defn}
Let $(\tilde{\rho},h_{\ast})$ be a pre-$(\Gamma,\psi)$-invariant pair. For any $\sigma_1$, $\sigma_2\in\Gamma$, put $g_{\sigma_1\sigma_2}:=\tilde{\rho}(\gamma_{\sigma_1}\gamma_{\sigma_2}\gamma_{\sigma_1\sigma_2}^{-1})$. We say that $(\tilde{\rho},h_{\ast})$ is \textit{$(\Gamma,\psi)$-invariant} if $h_1=1$ and the equality 
\begin{equation}\label{cocycle-rho}
g_{\sigma_1\sigma_2}=h_{\sigma_1}^{-1}\psi_{\sigma_1}(h^{-1}_{\sigma_2})h_{\sigma_1\sigma_2}
\end{equation} holds for any $\sigma_1$, $\sigma_2\in\Gamma$.
\end{Defn}

We will simplify the notation in what follows by writing $\ad h_{\sigma}\circ\tilde{\rho}$ as $h_{\sigma}\tilde{\rho}$, which is more natural from the point of view of $G$-action on $\Rep(\tilde{\Pi},G)$. However, when we evaluate $\tilde{\rho}$ at a particular element, say $\alpha$, we will use the usual notation $h_{\sigma}\tilde{\rho}(\alpha)h_{\sigma}^{-1}$.

\begin{Rem}\label{GammaAction}
Note that $\tilde{\rho}\mapsto\leftidx{^{\sigma}\!}{\tilde{\rho}}$ does not define a group action of $\Gamma$ on $\Rep(\tilde{\Pi},G)$ because $\gamma_{\tau}\gamma_{\sigma}$ is not necessarily equal to $\gamma_{\tau\sigma}$, for any $\sigma$, $\tau\in\Gamma$. But since $\tilde{\rho}(\gamma_{\tau}\gamma_{\sigma}\gamma_{\tau\sigma}^{-1})\in G$, we do have an action of $\Gamma$ on $\Ch(\tilde{\Pi},G)$.
\end{Rem} 

\subsection{}
Recall that an admissible $G\rtimes\Gamma$-representation $\rho:\Pi\rightarrow G\rtimes\Gamma$ restricts to a homomorphism $\tilde{\rho}:\tilde{\Pi}\rightarrow G$, which is called the underlying $G$-representation. Also recall that the semi-direct product $G\rtimes\Gamma$ is equipped with a natural section $s:\Gamma\rightarrow G\rtimes\Gamma$. 

Lemma \ref{resofrho} and Lemma \ref{extofinv} below will give a bijection between $(\Gamma,\psi)$-invariant pairs $(\tilde{\rho},h_{\ast})$ and admissible $G\rtimes\Gamma$-representations.
\begin{Lem}\label{resofrho}
If $\tilde{\rho}$ is the underlying $G$-representation of some admissible $G\rtimes\Gamma$-representation $\rho$, then there exists some cochain $h_{\ast}$ such that $(\tilde{\rho},h_{\ast})$ is $(\Gamma,\psi)$-invariant.
\end{Lem}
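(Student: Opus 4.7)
The plan is to extract the cochain $h_{\ast}$ directly from the lift $\rho$ itself. Since the right square of the diagram commutes, for every $\sigma\in\Gamma$ the element $\rho(\gamma_{\sigma})\in G\rtimes\Gamma$ projects to $\sigma$, so there is a unique $g_{\sigma}\in G$ with $\rho(\gamma_{\sigma})=g_{\sigma}s_{\sigma}$. I would define the cochain by $h_{\sigma}:=g_{\sigma}^{-1}$ (the inversion is just the bookkeeping choice that makes the conjugation formula line up with the convention in the definition of $(\Gamma,\psi)$-invariance).

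The verification is then a one-line computation. For any $\alpha\in\tilde{\Pi}$, one has $\gamma_{\sigma}^{-1}\alpha\gamma_{\sigma}\in\tilde{\Pi}$ by normality, so
\begin{equation*}
\tilde{\rho}(\gamma_{\sigma}^{-1}\alpha\gamma_{\sigma})=\rho(\gamma_{\sigma})^{-1}\rho(\alpha)\rho(\gamma_{\sigma})=s_{\sigma}^{-1}g_{\sigma}^{-1}\tilde{\rho}(\alpha)g_{\sigma}s_{\sigma}.
\end{equation*}
Using $s_{\sigma}^{-1}xs_{\sigma}=\psi_{\sigma}^{-1}(x)$ for $x\in G$ and applying $\psi_{\sigma}$, this rewrites as $\psi_{\sigma}(\tilde{\rho}(\gamma_{\sigma}^{-1}\alpha\gamma_{\sigma}))=g_{\sigma}^{-1}\tilde{\rho}(\alpha)g_{\sigma}=h_{\sigma}\tilde{\rho}(\alpha)h_{\sigma}^{-1}$, which is precisely the defining relation $h_{\sigma}\tilde{\rho}=\leftidx{^{\sigma}\!}{\tilde{\rho}}$.

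There is no real obstacle here; the lemma is essentially a translation, and the only subtle point is noticing that $h_{\ast}$ need only be a cochain, not a cocycle — we are not asking it to be compatible with the group law of $\Gamma$, which is good because $\gamma_{\ast}$ is only a set-theoretic section, so the $g_{\sigma}$'s generally fail to satisfy $g_{\tau\sigma}=\psi_{\tau}(g_{\sigma})g_{\tau}$. The cocycle-type condition relating different $\sigma$'s is what would be needed for the converse direction (building $\rho$ out of $(\tilde{\rho},h_{\ast})$), and is presumably handled in Theorem \ref{MainThm} referenced in the introduction; for the present lemma only the one-variable intertwining relation is required, and the argument above supplies it.
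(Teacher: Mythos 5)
Your proof is correct and is essentially the paper's own argument: the paper likewise computes $\leftidx{^{\sigma}\!}{\tilde{\rho}}(\alpha)=\psi_{\sigma}(\rho(\gamma_{\sigma})^{-1}\tilde{\rho}(\alpha)\rho(\gamma_{\sigma}))$ and takes for $h_{\sigma}$ an element inducing the resulting inner automorphism, and your explicit choice $h_{\sigma}=g_{\sigma}^{-1}=s_{\sigma}\rho(\gamma_{\sigma})^{-1}$ is exactly the one the paper later uses in Theorem \ref{MainThm}. Your closing remark about the cochain-versus-cocycle distinction is also consistent with how the paper defers that condition to Lemma \ref{extofinv}.
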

\begin{proof}
By the commutativity of the diagram (\ref{rho-CD}), we have $s_{\sigma}\rho(\gamma_{\sigma})^{-1}\in G$ for any $\sigma\in\Gamma$. So we define $h_{\sigma}:=s_{\sigma}\rho(\gamma_{\sigma})^{-1}$. We calculate, for any $\alpha\in\tilde{\Pi}$,
\begin{equation}
\begin{split}
\leftidx{^{\sigma}\!}{\tilde{\rho}}(\alpha)&=\psi_{\sigma}\circ\rho\circ p \circ C_{\sigma}(\alpha)\\
&=\psi_{\sigma}\circ\rho(\gamma_{\sigma}^{-1}\alpha\gamma_{\sigma})\\
&=s_{\sigma}\left(\rho(\gamma_{\sigma}^{-1})\tilde{\rho}(\alpha)\rho(\gamma_{\sigma})\right)s_{\sigma}^{-1}\\
&=h_{\sigma}\tilde{\rho}(\alpha)h_{\sigma}^{-1}.
\end{split}
\end{equation}
The equality (\ref{cocycle-rho}) is then equivalent to 
$$
\rho(\gamma_{\sigma_1}\gamma_{\sigma_2}\gamma_{\sigma_1\sigma_2}^{-1})=\rho(\gamma_{\sigma_1})\rho(\gamma_{\sigma_2})\rho(\gamma_{\sigma_1\sigma_2})^{-1},
$$which is obvious.
\end{proof}

\subsection{}
Conversely, we want to extend a $(\Gamma,\psi)$-invariant $(\tilde{\rho},h_{\ast})$ to an admissible  $G\rtimes\Gamma$-representation $\rho$. The proof of the above lemma suggests defining
\begin{equation}\label{extFML1}
\rho(\gamma_{\sigma}):=h_{\sigma}^{-1}s_{\sigma}.
\end{equation}
Any element of $\Pi$ can be uniquely written as $\eta\gamma_{\sigma}$ for some $\sigma\in\Gamma$ and $\eta\in\tilde{\Pi}$. We then define 
\begin{equation}\label{extFML2}
\rho(\eta\gamma_{\sigma}):=\tilde{\rho}(\eta)\rho(\gamma_{\sigma}).
\end{equation}
In particular, $\rho|_{\tilde{\Pi}}=\tilde{\rho}$.
\begin{Lem}\label{extofinv}
Let $(\tilde{\rho},h_{\ast})$ be a pre-$(\Gamma,\psi)$-invariant pair. Then,
\begin{itemize}
\item[(i)]
The equality $$g_{\sigma_1\sigma_2}=h_{\sigma_1}^{-1}\psi_{\sigma_1}(h^{-1}_{\sigma_2})h_{\sigma_1\sigma_2}$$ holds up to multiplication by $\Stab_G(\tilde{\rho})$ on the left;
\item[(ii)]
The formulae (\ref{extFML1}) and (\ref{extFML2}) define a homomorphism of groups $\Pi\rightarrow G\rtimes\Gamma$ if and only if $(\tilde{\rho},h_{\ast})$ is $(\Gamma,\psi)$-invariant.
\end{itemize}
\end{Lem}
In fact, $\Stab_G(\tilde{\rho})$ commutes with $g_{\sigma_1\sigma_2}$ which by definition lies in the image of $\tilde{\rho}$.
\begin{proof}
(i). Write $\sigma=\sigma_1\sigma_2$. Let us compute $\Phi:=\leftidx{^{\sigma_1}}{}(\leftidx{^{\sigma_2}}{}(\psi^{-1}_{\sigma}\circ\tilde{\rho}\circ C_{\sigma}^{-1}))$. Note that the equality $\psi_{\sigma}\circ\tilde{\rho}\circ C_{\sigma}=h_{\sigma}\tilde{\rho}$ implies $\psi_{\sigma}^{-1}\circ\tilde{\rho}\circ C_{\sigma}^{-1}=\psi_{\sigma}^{-1}(h^{-1}_{\sigma})\tilde{\rho}$.

On the one hand, using the equality $\leftidx{^{\tau}\!}{\tilde{\rho}}=\psi_{\tau}\circ\tilde{\rho}\circ C_{\tau}$ for $\tau$ equal to $\sigma_1$ and $\sigma_2$, we have,
\begin{equation}
\Phi=\tilde{\rho}\circ C_{\sigma}^{-1}\circ C_{\sigma_2}\circ C_{\sigma_1}.
\end{equation}
Since $g_{\sigma_1\sigma_2}=\tilde{\rho}(\gamma_{\sigma_1}\gamma_{\sigma_2}\gamma_{\sigma}^{-1})$, the above equation gives $\Phi=g_{\sigma_1\sigma_2}^{-1}\tilde{\rho}$.

On the other hand, using the equality $\leftidx{^{\tau}\!}{\tilde{\rho}}=h_{\tau}\tilde{\rho}$ for $\tau$ equal to $\sigma_1$, $\sigma_2$ and the equality at the beginning of the proof, we have,
\begin{equation}
\Phi=h_{\sigma}^{-1}\psi_{\sigma_1}(h_{\sigma_2})h_{\sigma_1}\tilde{\rho}.
\end{equation}
whence the first part.

(ii). Since $\gamma_1=1$ and $s_{1}=1$, the equality (\ref{extFML1}) implies that $h_1$ must be $1$. The $(\Gamma,\psi)$-invariant property implies
$$
\rho(\gamma_{\sigma_1}\gamma_{\sigma_2}\gamma_{\sigma}^{-1})=\rho(\gamma_{\sigma_1})\rho(\gamma_{\sigma_2})\rho(\gamma_{\sigma})^{-1},
$$
for any $\sigma_1$, $\sigma_2\in\Gamma$. One checks by direct computation that this equality implies $\rho(\eta_1\gamma_{\sigma_1}\eta_2\gamma_{\sigma_2})=\rho(\eta_1\gamma_{\sigma_1})\rho(\eta_2\gamma_{\sigma_2})$, for any $\sigma_1$, $\sigma_2\in\Gamma$, and any $\eta_1$, $\eta_2\in \tilde{\Pi}$.
\end{proof}

\subsection{Remarks on Fundamental Groups}\label{RemFunGrps}
Let $p:\tilde{X}\rightarrow X$ be a covering of topological spaces with Galois group $\Gamma$ (the group of covering transformations $\Aut(\tilde{X}/X)$). Choose base points $\tilde{x}\in\tilde{X}$ and $x=p(\tilde{x})$. Our convention is that by a juxtaposition $\beta\alpha$ of paths we mean the path starting from $\alpha$ and ending along $\beta$, so that we have the short exact sequence
\begin{equation}
1\longrightarrow\pi_1(\tilde{X},\tilde{x})\longrightarrow\pi_1(X,x)\longrightarrow \Gamma^{op}\longrightarrow 1.
\end{equation}
Then the general arguments for discrete groups apply to $\tilde{\Pi}=\pi_1(\tilde{X})$ and $\Pi=\pi_1(X)$. We choose a section $\gamma_{\ast}=(\gamma_{\sigma})_{\sigma\in\Gamma}$ of the natural map $\pi_1(X)\rightarrow\Gamma^{op}$ as in the general setting. 

Let $\lambda_{\sigma}$ be the unique lift of $\gamma_{\sigma}$ starting from $\tilde{x}$. For any $\sigma\in\Gamma$, let $\sigma$ also denote the isomorphism $\pi_1(\tilde{X},\tilde{x})\rightarrow\pi_1(\tilde{X},\sigma(\tilde{x}))$ and denote by $C_{\lambda_{\sigma}}$ the isomorphism
\begin{equation}
\begin{split}
\pi_1(\tilde{X},\sigma(\tilde{x}))&\longrightarrow\pi_1(\tilde{X},\tilde{x})\\
\alpha&\longmapsto \lambda^{-1}_{\sigma}\alpha\lambda_{\sigma},
\end{split}
\end{equation} 
For any $\alpha\in\pi_1(\tilde{X},\tilde{x})$, $C_{\lambda_{\sigma}}\circ\sigma(\alpha)=\lambda^{-1}_{\sigma}\sigma(\alpha)\lambda_{\sigma}$ is the unique lift of $\gamma_{\sigma}^{-1}\alpha\gamma_{\sigma}\in\pi_1(X)$ in $\pi_1(\tilde{X})$, therefore $C_{\lambda_{\sigma}}\circ\sigma$ can be identified with the conjugation by $\gamma_{\sigma}^{-1}$. Now pre-$(\Gamma,\psi)$-invariant $G$-representation should be defined by
\begin{equation}
\ad h_{\sigma}\circ\tilde{\rho}=\leftidx{^{\sigma}\!}{\tilde{\rho}}{}:=\psi_{\sigma}^{-1}\circ\tilde{\rho}\circ C_{\lambda_{\sigma}}\circ \sigma,
\end{equation}The reason why we have $\psi_{\sigma}^{-1}$ instead of $\psi_{\sigma}$ is as follows. Let $\psi^{op}:\Gamma^{op}\rightarrow \Aut G$ be the composition of $\psi$ and $\Gamma^{op}\rightarrow\Gamma$, $x\mapsto x^{-1}$. It defines a semi-direct product $G\rtimes_{\psi^{op}}\Gamma^{op}$, which is equipped with a section $s:\Gamma^{op}\rightarrow G\rtimes_{\psi^{op}}\Gamma^{op}$. In this circumstance, the section $s$ satisfies
\begin{equation}\label{spsi}
\psi_{\sigma^{-1}}(g)=\psi^{op}_{\sigma}(g)=s_{\sigma}gs_{\sigma}^{-1}.
\end{equation} 

In case $\tilde{\Pi}=\pi_1(\tilde{X})$ and $\Pi=\pi_1(X)$ are fundamental groups of some topological spaces, we may write $\Rep(\tilde{X},G)$, $\Rep_{\Gamma}(X,G)$, $\Ch(\tilde{X},G)$ and $\Ch_{\Gamma}(X,G)$.

\subsection{}
Fix $\Psi\in\Hom(\Gamma,\Out G)$ and let $\psi\in\Hom(\Gamma,\Aut G)$ be such that $\Omega(\psi)=\Psi$. As always, we choose a section $\gamma_{\ast}:\Gamma\rightarrow\Pi$ with $\gamma_1=1$. According to Remark \ref{GammaAction}, there is a $\Gamma$-action on $\Ch(\tilde{\Pi},G)$. Denote by $\Ch(\tilde{\Pi},G)^{\Gamma}$ the subvariety of $\Gamma$-fixed points. The action of $\Gamma$ and the fixed points locus $\Ch(\tilde{\Pi},G)^{\Gamma}$ only depend on $\Psi$.
\begin{Lem}
Let $\tilde{\rho}$ be a semi-simple $G$-representation of $\tilde{\Pi}$ and denote by $[\tilde{\rho}]$ its $G$-orbit. Then there exists some cochain $h_{\ast}$ such that $(\tilde{\rho},h_{\ast})$ is a pre-$(\Gamma,\psi)$-invariant pair if and only if $[\tilde{\rho}]$ lies in $\Ch(\tilde{\Pi},G)^{\Gamma}$.
\end{Lem}
\begin{proof}
Obvious.
\end{proof}

\subsection{}
Denote by $\Ch^{\circ}(\tilde{\Pi},G)\subset\Ch(\tilde{\Pi},G)$ the open subvariety of strongly irreducible representations. (The locus of irreducible representations is open, since it coincides with the stable locus. In this open subset, the stabiliser of each point is a finite extension of the centre of $G$. The locus of strongly irreducibles consists of those points with the smallest possible stabiliser, i.e. the centre of $G$, and so is open since our base field has characteristic 0.\footnote{As is pointed out by a referee, in positive characteristics, there are examples where the cardinality of the stabiliser group is not upper-semicontinuous.}) Let $\tilde{\rho}:\tilde{\Pi}\rightarrow G$ be such that $[\tilde{\rho}]\in\Ch^{\circ}(\tilde{\Pi},G)^{\Gamma}$ and choose some cochain $h_{\ast}$ such that $(\tilde{\rho},h_{\ast})$ is a pre-$(\Gamma,\psi)$-invariant pair. For such a pair, we define a cochain $(k_{\sigma\tau})_{\sigma,\tau\in\Gamma}\in C^2(\Gamma,Z_G)$ by
\begin{equation}
k_{\sigma\tau}:=h_{\sigma\tau}^{-1}\psi_{\sigma}(h_{\tau})h_{\sigma}g_{\sigma\tau}.
\end{equation}
This is well-defined by Lemma \ref{extofinv} (i), since $\Stab_G(\tilde{\rho})=Z_G$.
\begin{Rem}
If we were to work  in the setting of \S \ref{RemFunGrps}, we should define $k_{\sigma\tau}=h_{\sigma\tau}^{-1}\psi_{\tau}^{-1}(h_{\sigma})h_{\tau}g_{\sigma\tau}$ instead. We should also use the right action of $\Gamma$ on $Z_G$ with $\sigma\in\Gamma$ acting by $\psi_{\sigma^{-1}}$, and the differential map on cochains should be changed accordingly.
\end{Rem}

\begin{Prop}\label{defn-c(rho)}
Let $(k_{\sigma\tau})$ be the cochain associated to the given $\tilde{\rho}$ defined as above. Then,
\begin{itemize}
\item[(i)] The cochain $(k_{\sigma\tau})$ is a cocycle; 
\item[(ii)] The cohomology class of $(k_{\sigma\tau})$ does not depend on the choice of $h_{\ast}$;
\item[(iii)] The cochain $h_{\ast}$ can be modified (by $Z_G$ componentwise) to satisfy (\ref{cocycle-rho}) if and only if $(k_{\sigma\tau})$ is a coboundary.
\end{itemize}
\end{Prop}
We will denote by $\mathfrak{c}_{\psi}(\tilde{\rho})$ the cohomology class of $(k_{\sigma\tau})$, which only depends on $\psi$ and $\tilde{\rho}$. The cohomology group $H^2(\Gamma,Z_G)$ only depends on $\Psi$.
\begin{proof}
That $(k_{\sigma\tau})$ is a cocycle can be proved by direct computation. Since this is a known fact (\cite{Wu} and \cite{Sch}), we omit the proof. If $(h_{\sigma})$ is replaced by $(h_{\sigma}x_{\sigma})_{\sigma\in\Gamma}$ for a family $\{x_{\sigma}\in Z_G\}_{\sigma\in\Gamma}$, then $k_{\sigma\tau}$ is multiplied by $x_{\sigma\tau}^{-1}\psi_{\sigma}(x_{\tau})x_{\sigma}$ which is exactly $(\ud x_{\ast})_{\sigma\tau}$. Therefore $[k_{\sigma\tau}]$ does not depend on $h_{\ast}$. We also deduce from this that $(h_{\sigma})$ can be modified to satisfy the desired equality if and only if $(k_{\sigma\tau})$ is a coboundary.
\end{proof}

\subsection{}
We have obtained a partition of $\Ch^{\circ}(\tilde{\Pi},G)^{\Gamma}$ into subsets indexed by $H^2(\Gamma,Z_G)$, and the subset corresponding to the trivial cohomology class consists of the underlying $G$-representations of some admissible $G\rtimes_{\psi}\Gamma$-representations. The following proposition shows that more subsets in this partition can be obtained as restrictions of admissible $G\rtimes\Gamma$-representations. 

Let $\delta$ be the natural map $H^1(\Gamma,\Inn G)\rightarrow H^2(\Gamma,Z_G)$. Recall that $H^1(\Gamma,\Inn G)$ parametrises the equivalence classes of $\psi'\in\Hom(\Gamma,\Aut G)$ with $\Omega(\psi')=\Psi$.
\begin{Prop}\label{changepsi}
Let $\tilde{\rho}\in\Ch^{\circ}(\tilde{\Pi},G)^{\Gamma}$ with $\mathfrak{c}_{\psi}(\tilde{\rho})$ not necessarily equal to $1$. Then there exists some $\psi'$ with $\Omega(\psi')=\Psi$ such that $\mathfrak{c}_{\psi'}(\tilde{\rho})=1$ if and only if $\mathfrak{c}_{\psi}(\tilde{\rho})^{-1}=\delta(\psi')$ for some $\psi'$.
\end{Prop}
\begin{proof}
Given $\psi':\Gamma\rightarrow\Aut G$, such that $\psi'_{\sigma}=\ad x_{\sigma}\circ\psi_{\sigma}$ with $x_{\sigma}\in G$, we put $h'_{\sigma}=x_{\sigma}h_{\sigma}$ and define a cocycle with respect to $\psi'$ by 
\begin{equation}
k'_{\sigma\tau}:=h_{\sigma\tau}^{'-1}\psi_{\sigma}'(h'_{\tau})h'_{\sigma}g_{\sigma\tau}.
\end{equation}
Since both $\psi'$ and $\psi$ are group homomorphisms, there exists some $(d_{\sigma\tau})\in C^2(\Gamma,Z_G)$ such that $$d_{\sigma\tau}=x^{-1}_{\sigma\tau}x_{\sigma}\psi_{\sigma}(x_{\tau}).$$

Now we compute
\begin{equation}
\begin{split}
k'_{\sigma\tau}&=h_{\sigma\tau}^{'-1}\psi_{\sigma}'(h'_{\tau})h'_{\sigma}g_{\sigma\tau}\\
&=h^{-1}_{\sigma\tau}x^{-1}_{\sigma\tau}x_{\sigma}\psi_{\sigma}(x_{\tau}h_{\tau})x_{\sigma}^{-1}x_{\sigma}h_{\sigma}g_{\sigma\tau}\\
&=d_{\sigma\tau}k_{\sigma\tau}.
\end{split}
\end{equation}
We see that $k'_{\sigma\tau}$ is a coboundary if and only if the cohomology class of $(k_{\sigma\tau})$ and that of $(d_{\sigma\tau}^{-1})$ are equal. By definition, $\delta(\psi')$ is the cohomology class of $(d_{\sigma\tau})$.
\end{proof}
We have completed the proof of Theorem \ref{B}.

\section{Generic Conjugacy Classes}\label{GENCONJ}
Now we restrict ourselves to the case of Riemann surfaces. We introduce punctures on Riemann surfaces and study the local monodromy in twisted conjugacy classes. The goal is to give a suitable definition of generic conjugacy classes for $G\rtimes\Gamma~$-character varieties that guarantees irreducibility of $G\rtimes\Gamma$-representations. The definition will be a natural generalisation of the tame case of \cite[Corollary 9.7, Corollary 9.8]{B}.
\subsection{Notations}\label{mononota}
Let $p':\tilde{X}'\rightarrow X'$ be a possibly branched Galois covering of compact Riemann surfaces with $\Aut(\tilde{X}'/X')\cong\Gamma$. Denote by $h$ the genus of $X'$ and $g$ the genus of $\tilde{X}'$. Let $\mathcal{R}\subset X'$ be a finite set of points such that $p'$ is unbranched over $X:=X'\setminus\mathcal{R}$.  Let $I$ be the index set of the elements of $\mathcal{R}$ so that each point of $\mathcal{R}$ is written as $x_j$, $j\in I$. Denote by $\tilde{\mathcal{R}}\subset\tilde{X}'$ the inverse image of $\mathcal{R}$ and write $\tilde{X}:=\tilde{X}'\setminus\tilde{\mathcal{R}}$. Denote by $p$ the restriction of $p'$ to $\tilde{X}$. We fix the base points $\tilde{x}\in\tilde{X}$ and $x=p(\tilde{x})\in X$. For each $x_j\in\mathcal{R}$ and some $\tilde{x}_j\in p^{\prime-1}(x_j)$, put $n_j=|\Stab_{\Gamma}(\tilde{x}_j)|$ with $\Stab_{\Gamma}(\tilde{x}_j)=\lb\sigma_j\rb$, so $\sigma_j\in\Gamma$ is of order $n_j$. (It is well-known that the stabiliser group is necessarily cyclic. See for example \cite[\S 2]{Moo}.) The number $n_j$ only depends on $x_j$. Thus $p'$ is of the form $z\mapsto z^{n_j}$ around each point of $p^{\prime-1}(x_j)$.

For each $j\in I$, let $V_j$ be a sufficiently small neighbourhood of $x_j$. For each $j$, choose a point $y_j\in V_j$ and a loop $l_j$ around $x_j$ based at $y_j$, all with the same orientation. We can choose paths $\lambda_j$ from $x$ to $y_j$ such that  
$\gamma_j:=\lambda_j^{-1}l_j\lambda_j$, together with the generators $\alpha_i$, $\beta_i$, $1\le i\le h$, associated to the genus of $X$, generate $\pi_1(X)$ and satisfy the relation 
\begin{equation}
\prod_{i=1}^h[\alpha_i,\beta_i]\prod_{j\in I}\gamma_j=1.
\end{equation}

The choice of the path $\lambda_j$ determines a point $\tilde{y}_j$ over $y_j$ and thus the connected component of $p^{'-1}(V_j)$ containing $\tilde{y}_j$. Let us denote this connected component by $U_j$ and denote by $\tilde{x}_j$ the point in $U_j$ over $x_j$.  All other connected components of $p^{'-1}(V_j)$ are of the form $\tau(U_j)$ for some $\tau\in\Gamma$. For each such component, we fix such a $\tau$ and thus a point $\tau(\tilde{y}_j)$ in it. The objects associated to these connected components will be indicated by a subscript $(j,\tau)$, for example $U_{j,\tau}=\tau(U_j)$, $\tilde{y}_{j,\tau}=\tau(\tilde{y}_j)$, and in particular, $U_{j,1}=U_j$. If $r_j$ is the lift of $l_j$ starting from $\tilde{y}_j$, then 
\begin{equation}
\tilde{l}_j:=(\sigma_j^{n_j-1}\cdot r_j)\cdots(\sigma_j\cdot r_j)r_j
\end{equation}
is a loop in $U_j$ based at $\tilde{y}_j$, where $\sigma_j\cdot r_j$ is the image of $r_j$ under $\sigma_j$. Let $\tilde{l}_{j,\tau}=\tau(\tilde{l}_{j})$. Again, we can choose paths $\tilde{\lambda}_{j,\tau}$ for $\tilde{x}$ to $\tilde{y}_{j,\tau}$ such that $\tilde{\gamma}_{j,\tau}:=\tilde{\lambda}_{j,\tau}^{-1}\tilde{l}_{j,\tau}\tilde{\lambda}_{j,\tau}$ together with $\tilde{\alpha}_i$, $\tilde{\beta}_i$, $1\le i\le g$,  associated to the genus of $\tilde{X}$, generate $\pi_1(\tilde{X})$ and satisfy a similar relation as for $\gamma_j$'s. Note that the $\tilde{\lambda}_{j,1}$'s are not necessarily the lifts of the $\lambda_j$'s. 

\subsection{Monodromy of $\rho$}\label{monorho}
Let $\rho:\pi_1(X)\rightarrow G\rtimes\Gamma^{op}$ be as in \ref{rho-CD}. Fix $\psi:\Gamma\rightarrow\Aut G$ and write $\bar{G}=G\rtimes_{\psi^{op}}\Gamma^{op}$ and denote by $\tilde{\rho}$ the underlying $G$-representation. Since the end point of the lift of $\gamma_j$ is $\sigma_j(\tilde{x})$, $\gamma_j$ belongs to the $\pi_1(\tilde{X})$-coset in $\pi_1(X)$ corresponding to $\sigma_j$, and so $\rho(\gamma_j)$ lies in the connected component $Gs_{\sigma_j}$. Let $C_j$ be the $G$-conjugacy class of $\rho(\gamma_j)$. A different choice of $\lambda_j$ results in a conjugation of $\gamma_j$ in $\pi_1(X)$, whence a conjugation by $\bar{G}$ of $C_j$. Therefore, unlike usual character varieties, the classes $C_j$ depend on the choices of $\lambda_j$. In practice, we always fix the $\lambda_j$ throughout.

Now we consider what happens on $\tilde{X}$. The lift of $\gamma_j^{n_j}$ is conjugate  to $\tilde{\gamma}_{j,1}$, therefore $\tilde{\rho}(\tilde{\gamma}_{j,1})$ must lie in the conjugacy class $\smash{\tilde{C}_j:=C_j^{n_j}:=\{g^n\mid g\in C_j\}\subset G}$. Then we take $\tau$ to be an element that takes $U_j$ to some $U_{j,\tau}$ that does not meet $U_j$. The lift of $\gamma_{\tau}^{-1}\gamma^{n_j}_j\gamma_{\tau}$ ($\gamma_{\tau}$ is given by the fixed section $\Gamma\rightarrow\pi_1(X)$) is conjugate to $\tilde{\lambda}_{j,\tau}^{-1}\tilde{l}_{j,\tau}\tilde{\lambda}_{j,\tau}=\tilde{\gamma}_{j,\tau}$ in $\pi_1(\tilde{X})$, therefore 
\begin{equation}\label{gammajtau}
\tilde{\rho}(\tilde{\gamma}_{j,\tau})\text{ is conjugate to }\rho(\gamma_{\tau})^{-1}\rho(\gamma_j)^{n_j}\rho(\gamma_{\tau})\text{ by}\Ima\tilde{\rho}
\end{equation}
which lies in the conjugacy class $\rho(\gamma_j)^{-1}\tilde{C}_j\rho(\gamma_j)=s_{\tau}^{-1}\tilde{C}_js_{\tau}=\psi_{\tau}(\tilde{C}_j)=:\tilde{C}_{j,\tau}$. Similarly, $\rho(\gamma_{\tau}^{-1}\gamma_j\gamma_{\tau})\in s_{\tau}^{-1}(C_j)s_{\tau}=:C_{j,\tau}$ which lies in the connected component $Gs_{\tau\sigma_j\tau^{-1}}$. We have $C_{j,\tau}^{n_j}=\tilde{C}_{j,\tau}$. 

\subsection{}
In what follows. we need the following assumption:
\begin{description}
\item[(CYC)\label{CYC}] The image of $$\Omega(\psi):\Gamma\stackrel{\psi}{\rightarrow}\Aut G\rightarrow \Out G$$ is a cyclic subgroup.
\end{description}
This technical assumption will be used in a dimension estimate. It is typically satisfied in one of the following situations:
\begin{itemize}
\item[(i)]
$G$ is $\GL_n$ or almost-simple with root system not of type $D_4$;
\item[(ii)]
$\Gamma$ is cyclic.
\end{itemize}
\subsection{}\label{mathbfssigma}
Recall that $s:\Gamma^{op}\rightarrow G\rtimes\Gamma^{op}$ is the natural section. Fix a maximal torus $T$ and a Borel subgroup $B\subset G$ containing $T$. In each connected component $Gs_{\sigma}$, $\sigma\in\Gamma$, we choose a quasi-central element $\mathbf{s}_{\sigma}\in N_{\bar{G}}(T,B)$, so that $N_{\bar{G}}(T,B)=\sqcup_{\sigma\in\Gamma}T\mathbf{s}_{\sigma}$. Such $\mathbf{s}_{\sigma}$ exists by \cite[Proposition 1.16]{DM94}. By proposition \ref{DM18Prop1.16}, the semi-simple $G$-conjugacy classes in $Gs_{\sigma}$ are parametrised by the $W^{\mathbf{s}_{\sigma}}$-orbits in $$\tilde{\mathbf{T}}_{\sigma}:=T/[T,\mathbf{s}_{\sigma}]\cong (T^{\mathbf{s}_{\sigma}})^{\circ}/(T^{\mathbf{s}_{\sigma}})^{\circ}\cap[T,\mathbf{s}_{\sigma}].$$ with $t\in (T^{\mathbf{s}_{\sigma}})^{\circ}$ representing the class of $t\mathbf{s}_{\sigma}$. Denote by $\mathbf{T}_{\sigma}$ the quotient $\tilde{\mathbf{T}}_{\sigma}/W^{\mathbf{s}_{\sigma}}$. When $\sigma=\sigma_j$ for some $j\in I$, we will write $\mathbf{s}_j$, $\tilde{\mathbf{T}}_{j}$ and $\mathbf{T}_j$ instead of $\mathbf{s}_{\sigma_j}$, $\tilde{\mathbf{T}}_{\sigma_j}$ and $\mathbf{T}_{\sigma_j}$. 

Let $\mathcal{C}=(C_j)_{j\in I}$ be a tuple of $G$-conjugacy classes of $\bar{G}$, with $C_j$ contained in $Gs_{\sigma_j}$. Denote by $\Rep_{\Gamma,\mathcal{C}}(X,G)\subset\Rep_{\Gamma}(X,G)$ the subvariety consisting of $\rho$ satisfying $\rho(\gamma_j)\in C_j$, for all $j\in I$. Semi-simple tuples $\mathcal{C}$ are parametrised by $\prod_j\mathbf{T}_j$. We will define a non-empty subset $\mathbb{T}^{\circ}\subset\prod_j\mathbf{T}_j$ so that $\rho\in\Rep_{\Gamma}(X,G)$ is irreducible whenever $\rho$ lies in $\Rep_{\Gamma,\mathcal{C}}(X,G)$, for some tuple $\mathcal{C}$ whose semi-simple parts correspond to a point of $\mathbb{T}^{\circ}$. The quotient of $\Rep_{\Gamma,\mathcal{C}}(X,G)$ by $G$ will be denoted by $\Ch_{\Gamma,\mathcal{C}}(X,G)$.

\subsection{}
Let $P\subset G$ be a parabolic subgroup containing $B$ and let $L$ be the unique Levi factor of $P$ containing $T$ (\cite[Corollary 8.4.4]{Spr}). In this case we will simply say that $(L,P)$ contains $(T,B)$. There are only finitely many such pairs $(L,P)$. Suppose that $N_{\bar{G}}(L,P)$ meets all connected components of $\bar{G}$. This implies that for any $\sigma\in\Gamma$, the $G$-conjugacy class of $(L\subset P)$ is stable under $\mathbf{s}_{\sigma}$. But $(\mathbf{s}_{\sigma}(L),\mathbf{s}_{\sigma}(P))$ also contains $(T,B)$, so it is necessary that $(\mathbf{s}_{\sigma}(L),\mathbf{s}_{\sigma}(P))=(L,P)$ for any $\sigma\in\Gamma$. Therefore $N_{\bar{G}}(L,P)=\sqcup_{\sigma\in\Gamma}L\mathbf{s}_{\sigma}$. Write $\bar{L}=N_{\bar{G}}(L,P)$.

\begin{Lem}\label{dimest}
Let $L$ and $P$ be as above. Denote by $Z_{\bar{L}}$ and $Z_{\bar{G}}$ the centres of $\bar{L}$ and $\bar{G}$ respectively. If $L\ne G$, then $\dim Z_{\bar{L}}>\dim Z_{\bar{G}}.$
\end{Lem}
\begin{proof}
Because of \ref{CYC}, there is some $\sigma_0\in\Gamma$ such that $\ad\mathbf{s}_{\sigma_0}$ generates the image of $\Omega(\psi)$. Let $\sigma\in\Gamma$, then $\ad\mathbf{s}_{\sigma}$ and $\ad\mathbf{s}_{\sigma_0}^r$ lie in the same connected component of $\Aut G$, for some integer $r$. Since $\ad\mathbf{s}_{\sigma}$ and $\ad\mathbf{s}_{\sigma_0}^r$ both preserve $T$ and $B$, they only differ by $\ad t$ for some $t\in T$. We deduce that they induce the same action on $T$.
Therefore $Z_{\bar{L}}=C_{Z_L}(\mathbf{s}_{\sigma_0})$ and $Z_{\bar{G}}=C_{Z_G}(\mathbf{s}_{\sigma_0})$, since $Z_L$ and $Z_G$ are contained in $T$.

Now $L':=C_L(\mathbf{s}_{\sigma_0})^{\circ}$ is a Levi subgroup of $G':=C_G(\mathbf{s}_{\sigma_0})^{\circ}$. By \cite[Corollaire 1.25]{DM94}, if $L\ne G$, then $L'\ne G'$. By \cite[Proposition 1.23]{DM94}, $Z^{\circ}_{L'}=C_{Z^{\circ}_L}(\mathbf{s}_{\sigma_0})^{\circ}$ and $Z^{\circ}_{G'}=C_{Z^{\circ}_G}(\mathbf{s}_{\sigma_0})^{\circ}$. Then the lemma follows from the result for Levi subgroups of connected groups.
\end{proof}
\begin{Rem}
If $\mathbf{s}_{\sigma_{1}}$ and $\mathbf{s}_{\sigma_{2}}$ are unrelated, then we do not have a good control over the dimension of $C_{Z_L}(\mathbf{s}_{\sigma_1})\cap C_{Z_L}(\mathbf{s}_{\sigma_2})$. This is the reason why we impose \ref{CYC}.
\end{Rem}

\subsection{}
For any connected reductive algebraic group $H$, denote by 
$$\mathbf{D}_H:H\longrightarrow Z_H^{\circ}/(Z_H^{\circ}\cap[H,H])$$
the natural projection, identifying $H/[H,H]\cong Z_H^{\circ}/(Z_H^{\circ}\cap[H,H])$. For $H=\GL_n$, it can be identified with the determinant map. Note that $Z_H^{\circ}\cap[H,H]$ is a finite group.

Each element $\mathbf{t}_{\sigma}\in\mathbf{T}_{\sigma}$ is a $W^{\mathbf{s}_{\sigma}}$-orbit. Each element $t_{\sigma}\in\mathbf{t}_{\sigma}$ is a coset in $(T^{\mathbf{s}_{\sigma}})^{\circ}$. We fix a representative in $(T^{\mathbf{s}_{\sigma}})^{\circ}$ of each such $t_{\sigma}$, also denoted by $t_{\sigma}$. The choice of such representative will not matter. If $\sigma=\sigma_j$ for some $j\in I$, then we write $t_j$ and $\mathbf{t}_j$ instead of $t_{\sigma_j}$ and $\mathbf{t}_{\sigma_j}$.
\begin{Defn}\label{GCC}
A tuple of semi-simple conjugacy classes parametrised by $(\mathbf{t}_j)_{j\in I}$ is generic if the following condition is satisfied. For
\begin{itemize}
\item[-]
any$(L,P)$ containing $(T,B)$ with $P\ne G$ such that $N_{\bar{G}}(L,P)$ meets all connected components of $\bar{G}$, and 
\item[-]
any tuple $(t_j)_{j\in I}$ with $t_j\in\mathbf{t}_j$, 
\end{itemize}
the element
\begin{equation}\label{eq-GCC}
\prod_{j\in I}\prod_{\tau\in\Gamma/\lb\sigma_j\rb}\mathbf{D}_L(\mathbf{s}_{\tau}(t_j^{n_j}\mathbf{s}_j^{n_j}))\in Z_L^{\circ}/(Z_L^{\circ}\cap[L,L])
\end{equation}
is not equal to the identity, where $\mathbf{s}_{\tau}$ acts on $T$ by conjugation and $\tau$ runs over a set of representatives of $\Gamma/\lb\sigma_j\rb$. A tuple of conjugacy classes $\mathcal{C}$ is generic if the tuple of the conjugacy classes of the semi-simple parts of $\mathcal{C}$ is generic.
\end{Defn}

One can verify that $\mathbf{D}_L(\mathbf{s}_{\tau}(t^{n_j}\mathbf{s}_j^{n_j}))$ has constant value for $t\in[T,\mathbf{s}_j]$ so it only depends on the coset $t_j$. 

\begin{Rem}\label{^nsurjects}
Since $\mathbf{s}_j^{n_j}\in T$, the morphism of varieties $(T^{\mathbf{s}_{j}})^{\circ}\mathbf{s}_j\rightarrow T$, $t\mathbf{s}_j\rightarrow (t\mathbf{s}_j)^{n_j}$, surjects onto a connected component of $T^{\mathbf{s}_j}$.
\end{Rem}

\begin{Lem}\label{dimextimate}
Put $$(Z^{\circ}_{L})^{\Gamma}=\{z\in Z^{\circ}_L\mid \mathbf{s}_{\tau}(z)=z,\text{ for all $\tau\in\Gamma$}\},$$and similarly for $\left(Z_L^{\circ}/(Z_L^{\circ}\cap[L,L])\right)^{\Gamma}$. Then we have $$\dim\left(Z_L^{\circ}/(Z_L^{\circ}\cap[L,L])\right)^{\Gamma}=\dim (Z^{\circ}_L)^{\Gamma}=\dim Z_{\bar{L}}.$$
\end{Lem}
\begin{proof}
The first equality is obvious. We have $Z_{\bar{L}}=Z_L^{\Gamma}$ whence the second equality.
\end{proof}
\begin{Lem}\label{surjZGamma}
For any $j\in I$, the map $$\prod_{\tau\in\Gamma/\lb\sigma_j\rb}\mathbf{D}_L\circ\mathbf{s}_{\tau}:T^{\mathbf{s}_j}\longrightarrow \left(Z_L^{\circ}/(Z_L^{\circ}\cap[L,L])\right)^{\Gamma}$$ is a group homomorphism under which each connected component of $T^{\mathbf{s}_j}$ is mapped surjectively onto a connected component of the target.
\end{Lem}
Note that the image of $\mathbf{D}_L$ lies in $Z_L^{\circ}/(Z_L^{\circ}\cap[L,L])$. The point of this lemma is to show that after taking the "average" over $\Gamma/\lb\sigma_j\rb$, the image is $\Gamma$-invariant.
\begin{proof}
Let $t\in T^{\mathbf{s}_j}$ and $\sigma\in\Gamma$. We have $$\mathbf{s}_{\sigma}\big(\prod_{\tau\in\Gamma/\lb\sigma_j\rb}\mathbf{D}_L(\mathbf{s}_{\tau}(t))\big)=\prod_{\tau\in\Gamma/\lb\sigma_j\rb}\mathbf{D}_L(\mathbf{s}_{\sigma}\mathbf{s}_{\tau}(t)).$$ Note that $\mathbf{s}_{\sigma}\mathbf{s}_{\tau}$ differs from $\mathbf{s}_{\sigma\tau}$ by an element of $T$, and therefore they have the same action on $T$. Also, all elements in a coset $\tau\lb\sigma_j\rb$ have the same action on $T^{\mathbf{s}_j}$. The right hand side of the equality is thus equal to $\prod_{\tau\in\Gamma/\lb\sigma_j\rb}\mathbf{D}_L(\mathbf{s}_{\tau}(t))$. So the image is $\Gamma$-invariant.

Let $z\in (Z^{\circ}_{L})^{\Gamma}$. Then $$\prod_{\tau\in\Gamma/\lb\sigma_j\rb}\mathbf{D}_L(\mathbf{s}_{\tau}(z))=z^{|\Gamma|/n_j}.$$ The map $z\mapsto z^{|\Gamma|/n_j}$ is a surjection onto $((Z^{\circ}_{L})^{\Gamma})^{\circ}$, whence the surjectivity for other components.
\end{proof}

\subsection{}
Denote by $\tilde{\mathbb{T}}\subset\prod_j\tilde{\mathbf{T}}_j$ the closed subvariety defined by
\begin{equation}
\prod_{j\in I}\prod_{\tau\in\Gamma/\lb\sigma_j\rb}\mathbf{D}_G(\mathbf{s}_{\tau}(t_j^{n_j}\mathbf{s}_j^{n_j}))=1,
\end{equation}
with $t_i\in\tilde{\mathbf{T}}_j$. Write $\mathbf{W}:=\prod_jW^{\mathbf{s}_j}$. Then $\prod_j\mathbf{T}_j=(\prod_j\tilde{\mathbf{T}}_j)/\mathbf{W}$ and the action of $\mathbf{W}$ preserves $\tilde{\mathbb{T}}$. Define $\mathbb{T}:=\tilde{\mathbb{T}}/\mathbf{W}\subset\prod_j\mathbf{T}_j$.
\begin{Prop}
The subset of generic semi-simple conjugacy classes $\mathbb{T}^{\circ}\subset\mathbb{T}$ is Zariski open and non-empty.
\end{Prop}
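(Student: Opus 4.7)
The plan is to establish openness by a finiteness argument, and non-emptiness by comparing the defining equations of $\mathbb{T}$ and $\mathbb{T}_{L,P}$ through a dimension count based on Lemma \ref{dimest}.

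\textbf{Openness.} For each pair $(L,P)$ containing $(T,B)$ such that $L\ne G$ and $N_{\bar G}(L,P)$ meets every connected component of $\bar G$ (there are only finitely many such pairs, since they are all determined by subsets of the Dynkin diagram), the failure of the genericity condition cuts out a closed subvariety $\mathbb{T}_{L,P}\subset\mathbb{T}$ via the polynomial equation
\begin{equation*}
\prod_{j\in I}\prod_{\tau\in\Gamma/\langle\sigma_j\rangle}\mathbf{D}_L(\mathbf{s}_\tau(t_j^{n_j}\mathbf{s}_j^{n_j}))=1.
\end{equation*}
Hence $\mathbb{T}^\circ=\mathbb{T}\setminus\bigcup_{(L,P)}\mathbb{T}_{L,P}$ is Zariski open.

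\textbf{Non-emptiness.} The inclusion $[L,L]\subset[G,G]$ gives a natural surjective homomorphism $\pi:Q_L\twoheadrightarrow Q_G$ between $Q_H:=Z_H^\circ/(Z_H^\circ\cap[H,H])$, satisfying $\mathbf{D}_G=\pi\circ\mathbf{D}_L$ on $T$; passing to $\Gamma$-invariants yields $\pi:Q_L^\Gamma\twoheadrightarrow Q_G^\Gamma$, where $Q_H^\Gamma:=(Z_H^\circ)^\Gamma/((Z_H^\circ)^\Gamma\cap[H,H])$. By Lemma \ref{dimest}, $\dim Q_L^\Gamma=\dim Z_{\bar L}>\dim Z_{\bar G}=\dim Q_G^\Gamma$, so $(\ker\pi)^\circ$ has positive dimension. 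Now consider the auxiliary maps
\begin{equation*}
\Phi_H:\prod_j(T^{\mathbf{s}_j})^\circ\longrightarrow Q_H^\Gamma,\qquad (u_j)\longmapsto\prod_{j,\tau}\mathbf{D}_H(\mathbf{s}_\tau(u_j\mathbf{s}_j^{n_j})),
\end{equation*}
for $H=L,G$, each of which is a translate of a group homomorphism. Lemma \ref{surjZGamma}, combined with Remark \ref{^nsurjects} and the surjectivity of the $n_j$-th power map on the torus $(T^{\mathbf{s}_j})^\circ$, implies that the underlying homomorphism surjects onto $(Q_H^\Gamma)^\circ$. By construction $\tilde{\mathbb{T}}$ is $\Phi_G^{-1}(1)$ and the lift of $\mathbb{T}_{L,P}$ is $\Phi_L^{-1}(1)\cap\tilde{\mathbb{T}}$; since $\Phi_G=\pi\circ\Phi_L$, the image of each irreducible component of $\tilde{\mathbb{T}}$ under $\Phi_L$ is a coset of $(\ker\pi)^\circ$, which has positive dimension. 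Therefore $\Phi_L$ is non-constant on each component, $\mathbb{T}_{L,P}$ is a proper closed subvariety of $\mathbb{T}$, the finite union $\bigcup_{(L,P)}\mathbb{T}_{L,P}$ remains proper, and $\mathbb{T}^\circ\ne\emptyset$ follows after descending along the finite quotient $\prod_j\tilde{\mathbf{T}}_j\to\prod_j\mathbf{T}_j$.

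\textbf{Main obstacle.} The delicate step is controlling the translation ambiguity. By Remark \ref{^nsurjects}, $(t\mathbf{s}_j)^{n_j}$ sweeps out only one connected component of $T^{\mathbf{s}_j}$, so $\Phi_H$ is a translate of a homomorphism rather than a homomorphism itself. One must verify that this translation does not collapse the image of $\Phi_L|_{\tilde{\mathbb{T}}}$ to a single point, and that the factorization $\Phi_G=\pi\circ\Phi_L$ genuinely reduces the problem to the dimension inequality $\dim(\ker\pi)^\circ>0$. Once these bookkeeping issues are settled, Lemma \ref{dimest} is the engine delivering the result.
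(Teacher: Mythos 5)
Your proof is correct and follows essentially the same route as the paper: openness comes from the finiteness of the set of pairs $(L,P)$ (together with the finite group $\prod_j W^{\mathbf{s}_j}$, which you handle implicitly when descending along the finite quotient), and non-emptiness comes from showing the bad locus has strictly smaller dimension than $\tilde{\mathbb{T}}$ using Lemma \ref{dimest}, Remark \ref{^nsurjects} and Lemma \ref{surjZGamma}. Your factorization $\Phi_G=\pi\circ\Phi_L$ and the coset/kernel bookkeeping are just an explicit unpacking of the dimension count that the paper invokes in one line, and the translation ambiguity you flag is indeed harmless since a translate of a homomorphism still sends cosets to cosets of the same dimension.
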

\begin{proof}
Let $Z\subset \prod_j\tilde{\mathbf{T}}_j$ be the closed subset defined by: $(t_j)$ belongs to $Z$ if for some $(L,P)\supset(T,B)$ with $P\ne G$ such that $N_{\bar{G}}(L,P)$ meets all connected components of $\bar{G}$,
\begin{equation}
\prod_{j\in I}\prod_{\tau\in\Gamma/\lb\sigma_j\rb}\mathbf{D}_L(\mathbf{s}_{\tau}(t_j^{n_j}\mathbf{s}_j^{n_j}))= 1.
\end{equation}
By Remark \ref{^nsurjects} and Lemma \ref{surjZGamma}, the image of the map: $$\prod_{j\in I}\prod_{\tau\in\Gamma/\lb\sigma_j\rb}\mathbf{D}_L\circ\mathbf{s}_{\tau}(-)^{n_j}:\prod_j\tilde{\mathbf{T}}_j\longrightarrow\left(Z_L^{\circ}/(Z_L^{\circ}\cap[L,L])\right)^{\Gamma}$$is a connected component.
By Lemma \ref{dimest} and Lemma \ref{dimextimate}, we have $\dim Z<\dim\tilde{\mathbb{T}}$.

The finite group $\mathbf{W}$ acts on $\prod_j\tilde{\mathbf{T}}_j$ and preserves the closed subsets $\tilde{\mathbb{T}}$ and $\cup_{w\in\mathbf{W}}w\cdot Z$. Define $$\tilde{\mathbb{T}}^{\circ}=\tilde{\mathbb{T}}\setminus \bigcup_{w\in\mathbf{W}}w\cdot Z.$$ It is a $\mathbf{W}$-invariant open subset of $\tilde{\mathbb{T}}$, and is non-empty for dimension reason. Then by definition $(\mathbf{t}_j)\in\mathbb{T}^{\circ}$ if and only if all of its fibres in $\tilde{\mathbb{T}}$ are contained in $\tilde{\mathbb{T}}^{\circ}$ and so $\mathbb{T}^{\circ}=\tilde{\mathbb{T}}^{\circ}/\mathbf{W}$ is open and non-empty.
\end{proof}

\subsection{}
We conclude this section by the following proposition.
\begin{Prop}\label{Gen->Irr}
Suppose that $\mathcal{C}$ is a tuple of generic conjugacy classes. Then every element of $\Rep_{\Gamma,\mathcal{C}}(X,G)$ is an irreducible $\bar{G}$-representation.
\end{Prop}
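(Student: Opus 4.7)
The plan is to argue by contradiction, using a Levi projection together with the abelianization morphism $\mathbf{D}_L$ to produce from a hypothetical reducible $\rho$ an equation that is precisely the one forbidden by Definition~\ref{GCC}.

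Suppose $\rho\in\Rep_{\Gamma,\mathcal{C}}(X,G)$ is not irreducible. By the definition in \S3.1, $\Ima\rho\subset\bar P:=N_{\bar G}(P^\circ)$ for some proper parabolic $P^\circ\subset G^\circ$, and since $\rho$ surjects onto $\Gamma$, $\bar P$ meets every connected component of $\bar G$. Using Proposition~\ref{DM94Prop1.5} to produce a Levi decomposition $\bar P=R_u(P)\rtimes\bar L$ with $\bar L=N_{\bar G}(L,P)$, and conjugating $\rho$ by an appropriate $g\in G$ (which preserves $\Rep_{\Gamma,\mathcal{C}}(X,G)$ since each $C_j$ is a $G$-conjugacy class), I arrange $(L,P)\supset(T,B)$. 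As observed in \S\ref{mathbfssigma}, every $\mathbf{s}_\sigma$ then lies in $\bar L=\bigcup_{\sigma\in\Gamma}L\mathbf{s}_\sigma$, which still meets every component. Composing $\rho$ with the Levi projection $\pi_{\bar L}\colon\bar P\twoheadrightarrow\bar L$ yields $\rho'\colon\pi_1(X)\to\bar L$.

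The original local monodromy $\rho(\gamma_j)\in C_j$ is semi-simple, hence $R_u(P)$-conjugate to its Levi part $\rho'(\gamma_j)$, which accordingly still lies in the $G$-conjugacy class $C_j$. By Proposition~\ref{DM18Prop1.16} applied inside $\bar L$, I may write $\rho'(\gamma_j)=l_j(t'_j\mathbf{s}_j)l_j^{-1}$ with $l_j\in L$ and $t'_j\in(T^{\mathbf{s}_j})^\circ$; since $W^{\mathbf{s}_j}_L\subseteq W^{\mathbf{s}_j}$ and the $G$-conjugacy class of $t'_j\mathbf{s}_j$ is $C_j$, the $W^{\mathbf{s}_j}$-orbit of $t'_j$ in $\tilde{\mathbf{T}}_j$ is necessarily $\mathbf{t}_j$. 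Thus $t'_j$ is a genuine representative of $\mathbf{t}_j$ in the sense of Definition~\ref{GCC}.

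Now let $\tilde\rho'=\rho'|_{\pi_1(\tilde X)}\colon\pi_1(\tilde X)\to L$. By the monodromy formula \eqref{gammajtau} of \S\ref{monorho}, $\tilde\rho'(\tilde\gamma_{j,\tau})$ is $L$-conjugate to an $\mathbf{s}_\tau$-conjugate of $(t'_j\mathbf{s}_j)^{n_j}=t_j'^{n_j}\mathbf{s}_j^{n_j}$, the last equality using $t'_j\in(T^{\mathbf{s}_j})^\circ$. Applying the group homomorphism $\mathbf{D}_L\colon L\to Z_L^\circ/(Z_L^\circ\cap[L,L])$ to the standard surface relation
$$\prod_{i=1}^g(\tilde\alpha_i,\tilde\beta_i)\prod_{j\in I}\prod_{\tau\in\Gamma/\langle\sigma_j\rangle}\tilde\gamma_{j,\tau}=1$$
in $\pi_1(\tilde X)$ kills the commutators and the $L$-conjugations in the abelian target, leaving
$$\prod_{j\in I}\prod_{\tau\in\Gamma/\langle\sigma_j\rangle}\mathbf{D}_L\bigl(\mathbf{s}_\tau(t_j'^{n_j}\mathbf{s}_j^{n_j})\bigr)=1,$$
which directly contradicts Definition~\ref{GCC} applied to the proper pair $(L,P)$ and the lifts $(t'_j)_{j\in I}$. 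Hence no such reducible $\rho$ can exist.

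The hard step is the second one: certifying that the $t'_j$ produced by the Levi projection is a lift of the prescribed orbit $\mathbf{t}_j$ rather than some other orbit. A secondary bookkeeping issue is to reconcile the side of conjugation by $\mathbf{s}_\tau$ between the monodromy formula and Definition~\ref{GCC}, which is handled by observing that the product ranges over a set of coset representatives in $\Gamma/\langle\sigma_j\rangle$ and that inner automorphisms by elements of $L$ (in particular of $T$) act trivially on the abelian quotient $L/[L,L]$.
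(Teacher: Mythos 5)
Your proof follows essentially the same route as the paper's: reduce to a standard pair $(L,P)\supset(T,B)$ by a $G$-conjugation, pass to the Levi, and apply $\mathbf{D}_L$ to the surface relation in $\pi_1(\tilde{X})$ to contradict Definition \ref{GCC}, with the identification of the orbit $\mathbf{t}_j$ secured by Proposition \ref{DM18Prop1.16} exactly as in the paper. The one point where you fall short of the full statement is the assumption that $\rho(\gamma_j)$ is semi-simple: Definition \ref{GCC} declares an arbitrary tuple generic when the tuple of the classes of its semi-simple parts is, so the proposition also covers non-semi-simple $C_j$; the paper closes this case with the one-line observation that $\mathbf{D}_L\circ\pi_L$ depends only on semi-simple parts (which lie in $G.\mathbf{s}_j$ because $\ch k\nmid|\Gamma|$ forces unipotent parts into $G$), and your argument is completed by the same remark.
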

\begin{proof}
Fix $T\subset B$ and $\mathbf{s}_{\sigma}$ as in \S \ref{mathbfssigma}. Suppose that $\rho\in\Rep_{\Gamma,\mathcal{C}}(X,G)$ is not irreducible. Then there exists some proper parabolic subgroup $P\subset G$ such that $N_{\bar{G}}(P)$ meets all connected components of $\bar{G}$ and $\Ima\rho\subset N_{\bar{G}}(P)$. Up to a $G$-conjugation we can assume that $P$ contains $B$. Let $L$ be the unique Levi factor of $P$ containing $T$. 

Put $c_j:=\rho(\gamma_j)\in P\mathbf{s}_j$, then $\smash{c_j^{n_j}\in P}$. For $\tau$ representing a coset in $\Gamma/\lb\sigma_j\rb$, $\tilde{\rho}(\tilde{\gamma}_{j,\tau})$ is $P$-conjugate to $\smash{\mathbf{s}_{\tau}(c_j^{n_j})}$ by (\ref{gammajtau}). Let $\pi_L:P\rightarrow L$ be the natural projection. Using a presentation of $\pi_1(\tilde{X})$ by $\{\tilde{\gamma}_{j,\tau},~\tilde{\alpha}_i,~\tilde{\beta}_i\}$, we find
\begin{equation}
\prod_{j\in I}\prod_{\tau\in\Gamma/\lb\sigma_j\rb}\mathbf{D}_L\circ\pi_L(\mathbf{s}_{\tau}(c_j^{n_j}))=1.
\end{equation}
Note that the value of $\mathbf{D}_L$ only depends on the semi-simple parts. The semi-simple part $c_{j,s}$ of $c_j$ is contained in $G\mathbf{s}_j$ because all unipotent elements are contained in $G$. In particular, $c_{j,s}\in P\mathbf{s}_j$. It is therefore $P$-conjugate to an element of $N_{\bar{G}}(T,B)$ and is further $L$-conjugate to an element of $(T^{\mathbf{s}_{j}})^{\circ}\mathbf{s}_j$ by Proposition \ref{DM18Prop1.16}. Now, the above relation contradicts the definition of generic conjugacy classes.
\end{proof}

\section{Double Coverings}\label{DOUCOV}

We apply our previous results to a simple but important example: $\GL_n\rtimes\lb\sigma\rb~$-character varieties. The geometric setting of this section is as follows. We have a double covering of compact Riemann surfaces $p':\tilde{X}'\rightarrow X'$. It has Galois group $\Gamma\cong\mathbb{Z}/2\mathbb{Z}$. By Hurwitz formula, there can only be an even number of ramification points. Let $\mathcal{R}\subset X'$, $\tilde{\mathcal{R}}$ and $p:\tilde{X}\rightarrow X$ be as in \S \ref{mononota}. The induced map between fundamental groups will also be denoted by $p:\tilde{\Pi}\rightarrow\Pi$. To simplify the discussion, we will make the following assumption on $\mathcal{R}$: If $p'$ is branched, then $\mathcal{R}$ is exactly the set of ramification points, and we write $|\mathcal{R}|=2k$; Otherwise $\mathcal{R}$ is a non empty finite set, and we write $|\mathcal{R}|=k$.

\subsection{Automorphisms of $\GL_n(\kk)$}\label{autoGLn}
We will write $G=\GL_n(\kk)$. Let $J_0$ be the skew-diagonal matrix defined by $(J_0)_{ij}=\delta_{i,n+1-j}$, and if $n$ is even, put $$t=\diag(1,\ldots,1,-1,\ldots,-1)$$ with an equal number of $1$ and $-1$. Put $J=tJ_0$. We define $\sigma_o$ as the automorphism $g\mapsto J_0g^{-t}J^{-1}_0$. If $n$ is even, we also consider the automorphism $\sigma_s$ which sends $g$ to $Jg^{-t}J^{-1}$. By \cite[Lemma 2.9]{LiSe}, there exists a unique conjugacy class of outer automorphisms of order 2 of $G$ if $n$ is odd, and there are two such conjugacy classes if $n$ is even. These two conjugacy classes have $\sigma_o$ and $\sigma_s$ as representatives respectively. The centralisers of $\sigma_o$ and $\sigma_s$ are orthogonal groups and symplectic groups respectively. 

A choice of an involution $\sigma\in\Aut G$ defines a semi-direct product $G\rtimes \mathbb{Z}/2\mathbb{Z}$, which we will denote by $G\rtimes\lb\sigma\rb$. \begin{Prop}\label{classify-GZ/2Z}
The semi-direct products $G\rtimes\lb\sigma_o\rb$, $G\rtimes\lb\sigma_s\rb$ and $G\times\mathbb{Z}/2\mathbb{Z}$ are not isomorphic, unless $n$ is odd, in which case $G\rtimes\lb\sigma_o\rb$ is isomorphic to $G\rtimes\lb\sigma_s\rb$. Any semi-direct product $G\rtimes\lb\sigma\rb$ defined by an involution is isomorphic to one of these groups.
\end{Prop}
\begin{proof}
We apply Theorem \ref{para-GGamma} to this special case. A homomorphism $\Psi:\mathbb{Z}/2\mathbb{Z}\rightarrow\Out G$ is determined by the image of $1\in\mathbb{Z}/2\mathbb{Z}$. There are two cases: $\Psi(1)=1$ or $\Psi(1)\ne 1$. In the first case, $H^2(\mathbb{Z}/2\mathbb{Z},\kk^{\ast})$ is trivial and the resulting semi-direct product is the direct product. In the second case, $H^2(\mathbb{Z}/2\mathbb{Z},\kk^{\ast})=\{\pm1\}$, and we need to distinguish the case of even $n$ and odd $n$. It is easy to see that the set of conjugacy classes of involutions in $\Aut G\setminus\Inn G$ is in bijection with $H^1(\mathbb{Z}/2\mathbb{Z},\PGL_n)$. If $n$ is odd, then $H^1(\mathbb{Z}/2\mathbb{Z},\PGL_n)$ consists of one point, mapped to $+1$ by $\delta$. If $n$ is even, then $H^1(\mathbb{Z}/2\mathbb{Z},\PGL_n)$ consists of two points, mapped surjectively onto $H^2(\mathbb{Z}/2\mathbb{Z},\kk^{\ast})$. Therefore, there are two isomorphism classes if $n$ is even and one isomorphism class if $n$ is odd.
\end{proof}

If $n$ is even, we will write $\leftidx{^s\!}{\bar{G}}=G\rtimes\lb\sigma_s\rb$ and $\leftidx{^o\!}{\bar{G}}=G\rtimes\lb\sigma_o\rb$. We write $\bar{G}=\leftidx{^s\!}{\bar{G}}$ or $\leftidx{^o\!}{\bar{G}}$ when there is no need to distinguish them. When $n$ is odd, we will simply write $\bar{G}=\leftidx{^o\!}{\bar{G}}$. Note however, that the above classification also works for $G=\SL_n(\kk)$. But if $G=\PGL_n(\kk)$, the centre is trivial, so $\leftidx{^s\!}{\bar{G}}$ is isomorphic to $\leftidx{^o\!}{\bar{G}}$, and they are actually isomorphic to $\Aut(G)$. 

\begin{Eg}\label{stratify-Ch}
Let $\sigma$ be an outer automorphism. By Theorem \ref{B} and the fact that $H^2(\Gamma,\kk^{\ast})\cong\{\pm 1\}$, the fixed points locus $\Ch^{\circ}(\tilde{\Pi},G)^{\Gamma}$ in the usual character variety of $\tilde{\Pi}$ is divided into two parts. The two subsets corresponding to $+1$ and $-1$ are the underlying representations of admissible $\leftidx{^o\!}{\bar{G}}$-representations and admissible $\leftidx{^s\!}{\bar{G}}$-representations of $\Pi$ respectively.
\end{Eg}

\begin{Rem}
The $\bar{G}$-character variety can also be regarded as a moduli of local systems. Given a local system $\mathcal{L}$ on $\tilde{X}'$, we say that $\mathcal{L}$ is invariant of signature "$+$" if there is an isomorphism $\Phi:\mathcal{L}\isom\theta^{\ast}\mathcal{L}^{\vee}$, where $\theta^{\ast}\mathcal{L}^{\vee}$ is the pull-back of the dual local system via the non trivial covering transformation $\theta$, satisfying $\theta^{\ast}\Phi^{-t}\circ\Phi=\Id$, where $\Phi^{-t}$ is the transpose-inverse. We say that $\mathcal{L}$ is invariant of signature "$-$" if there is such an isomorphism satisfying $\theta^{\ast}\Phi^{-t}\circ\Phi=-\Id$. The $\leftidx{^o\!}{\bar{G}}$-character variety (resp. $\leftidx{^s\!}{\bar{G}})$ is then the moduli of invariant $(\mathcal{L},\Phi)$ on $\tilde{X}'$ of signature $+$ (resp. $-$). 
\end{Rem}

\subsection{Maximal Parabolic Subgroups of $G\rtimes\lb\sigma\rb$}\label{maxparaGbar}
Let $\sigma=\sigma_s$ or $\sigma_o$. We are only interested in those parabolic subgroups that meet both connected components. 

By \S \ref{PmeetsG1}, a parabolic subgroup $P\subset\bar{G}$ of the form $N_{\bar{G}}(P^{\circ})$ meets the connected component $G\sigma$ if and only if the $G$-conjugacy class of $P^{\circ}$ corresponds to a $\sigma$-stable subdiagram of the Dynkin diagram of $G^{\circ}$. Therefore if we take for $T$ and $B$ the subgroups of diagonal matrices and of upper triangular matrices, a standard parabolic $P^{\circ}$ containing $B$ such that $N_{\bar{G}}(P^{\circ})$ meets $G\sigma$ and is maximal among the parabolics satisfying these properties must consist of matrices of the form
\begin{equation}\label{max-para}
  \renewcommand{\arraystretch}{1.2}
\left(
  \begin{array}{ c | c c | c }
    A &  \multicolumn{2}{c}{\ast} & \ast  \\
    \cline{1-3}
     0 & & & \multirow{2}*{$\ast$}  \\
     0 & \multicolumn{2}{c|}{\raisebox{.6\normalbaselineskip}[0pt][0pt]{$B$}} &   \\
    \cline{2-4}
     \mc{0} & 0 & 0 & C \\
     \end{array}
  \right),
\end{equation}
where $A$ and $C$ are square matrices of the same size. Normalisers of such $P^{\circ}$'s ($=P^{\circ}\sqcup P^{\circ}\sigma$) are the representatives of the $G^{\circ}$-conjugacy classes of maximal parabolic subgroups of $\bar{G}$ that meet $G\sigma$.

\subsection{Semisimple Conjugacy Classes of $G\rtimes\lb\sigma\rb$}\label{ssclasses}
Let $\sigma=\sigma_s$ or $\sigma_o$. Let $T\subset G$ be the maximal torus of diagonal matrices and let $W=W_G(T)$, which admits an action of $\sigma$ induced from $G$. Denote by $W^{\sigma}$ the subgroup of $\sigma$-fixed points. The Borel subgroup of upper triangular matrices is also stable under $\sigma$.

Denote by $(T^{\sigma})^{\circ}$ the connected centraliser of $\sigma$ in $T$. It consists of matrices of the form
\begin{equation}\label{repres-ss-Gs}
\diag(a_1,\ldots,a_m,a_m^{-1},\ldots,a_1^{-1}),~a_i\in \kk^{\ast}
\end{equation}
if $n=2m$, and with an extra $1$ in the middle if $n$ is odd. 

Denote by $[T,\sigma]$ the commutator. It consists of 
\begin{equation}
\diag(b_1,\ldots,b_m,b_m,\ldots,b_1),~b_i\in \kk^{\ast},
\end{equation}
if $n=2m$, and with an extra entry $b\in\kk^{\ast}$ in the middle if $n$ is odd.

So $S:=(T^{\sigma})^{\circ}\cap[T,\sigma]$ consists of
\begin{equation}
\diag(e_1,\ldots,e_m,e_m,\ldots,e_1),~e_i\in\{\pm1\},
\end{equation}
if $n=2m$, and with an extra $1$ in the middle if $n$ is odd. 

By Proposition \ref{DM18Prop1.16}, the semisimple conjugacy classes in $G\sigma$ are parametrised by the $W^{\sigma}$-orbits on the quotient $(T^{\sigma})^{\circ}/S$. The two automorphisms $\sigma_s$ and $\sigma_o$ have the same action on $T$ and on $W$, so we see that there is no difference between them regarding the parametrisation. We have $W^{\sigma}\cong(\mathbb{Z}/2\mathbb{Z})^m\rtimes\mathfrak{S}_m$, where $\mathfrak{S}_m$ acts by permuting the factors. According to the proposition,  
the following operations leave a representative (\ref{repres-ss-Gs}) in the same conjugacy class:
\begin{itemize}
\item[-] Interchanging $a_i$ and $a_i^{-1}$ (action of $\mathbb{Z}/2\mathbb{Z}$);
\item[-] Changing any pair $(a_i,a_i^{-1})$ to $(-a_i,-a_i^{-1})$ (quotient by $S$);
\item[-] Symmetrically permuting the $a_i$'s and $a_i^{-1}$'s (action of $\mathfrak{S}_m$).
\end{itemize}

\subsection{Generic conjugacy classes in $\GL_n\rtimes\lb\sigma\rb$}\label{GenConGLnsigma}We consider two situations.

(i). Branched covering.
Let $\mathcal{C}=(C_1,\ldots,C_{2k})$ be a $2k$-tuple of semi-simple $G$-conjugacy classes contained in $G\sigma$. Each $C_j$ is determined by an $m$-tuple $(a_{j,1},\ldots,a_{j,m})$ with each $a_{j,i}\in\kk^{\ast}$ (See (\ref{repres-ss-Gs})). Write $\Lambda=\{1,\ldots,m\}$. For any $j$, any subset $\mathbf{A}\subset\Lambda$ and any $|\mathbf{A}|$-tuple of signs $\smash{\mathbf{e}=(e_i)_{i\in\mathbf{A}}}$, $e_i\in\{\pm 1\}$, write $\smash{[\mathbf{A},\mathbf{e}]_j=\prod_{i\in\mathbf{A}}(a_{j,i}^{e_i})^2}$. 

We explain below that the tuple $\mathcal{C}$ is generic if and only if for any $1\le l \le m$, any $2k$-tuple $(\mathbf{A}_1,\ldots,\mathbf{A}_{2k})$ of subsets of $\Lambda$ such that $|\mathbf{A}_1|=\cdots=|\mathbf{A}_{2k}|=l$, and any $2k$-tuple of $l$-tuples $(\mathbf{e}^1,\ldots,\mathbf{e}^{2k})$ of signs, we have
\begin{equation}
[\mathbf{A}_1,\mathbf{e}^1]_1\cdots[\mathbf{A}_{2k},\mathbf{e}^{2k}]_{2k}\ne1.
\end{equation}

Here we unravel Definition \ref{GCC}. Since $p'$ is branched at each $x_j$, we have $n_j=2$ and the set $\Gamma/\lb\sigma_j\rb$ is trivial. Let $t_j\sigma=t_j\mathbf{s}_j$ be a representative of $C_j$ and $t_j$ is of the form (\ref{repres-ss-Gs}). Then $t_j^2\sigma^2=t_j^2$. We then take a maximal $\sigma$-stable standard parabolic subgroup as in \ref{max-para}. The map $\mathbf{D}_L$ consists of three determinant maps: $\det_A$, $\det_B$ and $\det_C$. The map $\det_B$ always vanishes and $\det_C$ gives the inverse of $\det_A$ in view of (\ref{repres-ss-Gs}). Suppose that the matrix $A$ has rank $l$, then the $\det_A$-component of $\mathbf{D}_L(t_j^2)$ is $\prod_{1\le i\le l}a_{j,i}^2$. But we could have used other representatives of $C_j$ that are different from $t_j\sigma$. Consequently, instead of using $1\le i\le l$, we can use $i\in\mathbf{A}$ for any subset $\mathbf{A}\subset\Lambda$ of size $l$. An $l$-tuple of signs $\mathbf{e}$ arises from the action of $(\mathbb{Z}/2\mathbb{Z})^m\subset W^{\sigma}$. We get $\smash{[\mathbf{A},\mathbf{e}]_j=\prod_{i\in\mathbf{A}}(a_{j,i}^{e_i})^2}$ as defined above.
\begin{Rem}
The definition of generic conjugacy classes (Definition \ref{GCC}) involves all pairs $(L,P)$, whereas the above discussion only considers those maximal $P$. If $P$ is not maximal, then $\mathbf{D}_L$ would have more components, say, $\det_{A'}$ and $\det_{C'}$ with $\det_{C'}=\det_{A'}^{-1}$. Then the genericity condition requires that some certain product of the eigenvalues that arises as $\det_{A'}$ is not equal to one. But this gives no new restriction on the eigenvalues.
\end{Rem}

(ii). Unbranched covering. Let $\mathcal{C}=(C_1,\ldots,C_{k})$ be a $k$-tuple of semi-simple conjugacy classes in $G$. For each $j$, let $(a_{j,1},\ldots,a_{j,n})$, $a_{j,i}\in\kk^{\ast}$ be the eigenvalues of $C_j$ . Write $\Lambda=\{1,\ldots,n\}$. For any $j$, any subset $\mathbf{A}\subset\Lambda$, write $\smash{[\mathbf{A}]_j=\prod_{i\in\mathbf{A}}a_{j,i}}$. 

In this case, the tuple $\mathcal{C}$ is generic if and only if for any $1\le l \le m$, any two $k$-tuples $(\mathbf{A}_1,\ldots,\mathbf{A}_{k})$ and $(\mathbf{B}_1,\ldots,\mathbf{B}_{k})$ of subsets of $\Lambda$ such that 
\begin{itemize}
\item
$|\mathbf{A}_1|=\cdots=|\mathbf{A}_{k}|=|\mathbf{B}_1|=\cdots=|\mathbf{B}_{k}|=l$;
\item
$\mathbf{A}_j\cap \mathbf{B}_j=\varnothing$, for all $j$,
\end{itemize}
 we have
\begin{equation}\label{GenConGLn1}
[\mathbf{A}_1]_1\cdots[\mathbf{A}_{k}]_{k}[\mathbf{B}_1]_1^{-1}\cdots[\mathbf{B}_{k}]_{k}^{-1}\ne1.
\end{equation}

Again we unravel Definition \ref{GCC}. Now $p'$ is unbranced at each $x_j$, so we have $n_j=1$ and $\Gamma/\lb\sigma_j\rb=\Gamma$. The representative $t_j$ of $C_j$ as in Definition \ref{GCC} is now a diagonal matrix with eigenvalues $(a_{j,1},\ldots,a_{j,n})$. The map $\mathbf{D}_L$ is the same as in the branched case, consisting of three determinant maps, but we should evaluate at $t_j\sigma(t_j)$ due to the product over $\Gamma$. The determinant map $\det_A$ gives a term of the form $[\mathbf{A}]_j[\mathbf{B}]_j^{-1}$ and $\det_C$ gives its inverse.

\subsection{Irreducible Subgroups of $\GL_n\rtimes\lb\sigma\rb$}
Let $H_0\subset\GL_n$ be a topologically finitely generated closed subgroup, i.e. $H_0=H(\mathbf{x})$ for some finite tuple $\mathbf{x}$ of closed points of $\GL_n$ (see \S \ref{H(x)}) and let $H\subset\GL_n\rtimes\lb\sigma\rb$ be a closed subgroup generated by $H_0$ and an element $x_0\sigma\in\GL_n\sigma$ satisfying
\begin{itemize}
\item[(i)]
$x_0\sigma$ normalises $H_0$;
\item[(ii)] 
$(x_0\sigma)^2\in H_0$.
\end{itemize}
In particular, $H_0=H\cap \GL_n$.
\begin{Prop}\label{irrsbgpGLsigma}
If $H$ is irreducible, then the natural representation $\kk^n$ of $\GL_n$ is a direct sum of pairwise non isomorphic irreducible $H_0$-representations, say $\bigoplus_jV_j$, and the centraliser $C_{\GL_n}(H)$ is a finite abelian group.
\end{Prop}
\begin{proof}
The second statement follows from the proof of the first.

Since $H$ is irreducible, $H_0$ is completely reducible in $\GL_n$ by \cite[Lemma 6.12]{BMR}, and so $\kk^n$ can be written as a direct sum of irreducible $H_0$-representations, say 
\begin{equation}\label{H0irrM}
\kk^n\cong\bigoplus_jV_j^{\oplus r_j},
\end{equation}
where $V_j$ is not isomorphic to $V_{j'}$ whenever $j\ne j'$. We see that
\begin{equation}
C_{\GL_n}(H_0)\cong\prod_j\GL_{r_j},
\end{equation}
where each entry of an element of $\GL_{r_j}$ is identified with a scalar endomorphism of $V_j$.

Let us now prove that $r_j=1$ for all $j$. Note that the centre of $\GL_n\rtimes\lb\sigma\rb$ is $\{\pm \Id\}$, so the irreducibility of $H$ is equivalent to having finite centraliser in $\GL_n$ by Theorem \ref{st=irr}. (The group $H$ is topologically generated by finitely many, say $m$, elements. Then consider the conjugation action of $G^{\circ}$ on $G^m$, $G=\GL_n\rtimes\lb\sigma\rb$.) The proof is achieved by exploiting the fact that $C_{\GL_n}(H)$ has zero dimension. Since $x_0\sigma$ normalises $H_0$, it normalises $C_{\GL_n}(H_0)$. Also, $(x_0\sigma)^2\in H_0$, so $x_0\sigma$ defines an order 2 (in particular, semi-simple) automorphism of $C_{\GL_n}(H_0)$ as an algebraic group. In order for an element of $C_{\GL_n}(H_0)$ to centralise $H$, it suffices for it to commute with $x_0\sigma$. Choose a $x_0\sigma$-stable maximal torus of $C_{\GL_n}(H_0)$ and consider its root system with respect to this maximal torus. (Such maximal torus exists because $x_0\sigma$ induces a semi-simple automorphism.) If the action of $x_0\sigma$ permutes two root subgroups of $C_{\GL_n}(H_0)$, then $C_{\GL_n}(H)$ would contain a root subgroup, and so have positive dimension, which is a contradiction. So $x_0\sigma$ fixes all roots of $C_{\GL_n}(H_0)$. But then it would be an inner semi-simple automorphism of the derived subgroup of $C_{\GL_n}(H_0)$, thus fixes pointwise a maximal torus of it. Again using the zero-dimension of $C_{\GL_n}(H)$, we deduce that the derived subgroup of $C_{\GL_n}(H_0)$ must have rank 0, i.e. $C_{\GL_n}(H_0)$ is a torus. This means that $r_j=1$ for all $j$. As a subgroup, $C_{\GL_n}(H)$ must be abelian.
\end{proof}
\begin{Rem}\label{LieirrsbgpGLsigma}
(i). The proof of the above proposition also works in odd positive characteristics. (ii). Taking the Lie algebras, we get 
$$C_{\mathfrak{gl}_n}(H_0)\cong\bigoplus_j \kk,\quad C_{\mathfrak{gl}_n}(H)=\{0\},$$ unde our assumption that $\kk=\mathbb{C}$.
\end{Rem}
\begin{Rem}
Here is an alternative proof of the assertion that $C_{\GL_n}(H_0)$ is a torus.\footnote{This proof is suggested to the author by a referee.} By \cite[Theorem 7.2]{St}, $x_0\sigma$ normalises a Borel subgroup $B_0$ of $C_{\GL_n}(H_0)$. Let $U_0$ be the unipotent radical of $B_0$. Since $U_0\subset C_{\GL_n}(H_0)$, we have $H_0\subset C_{\GL_n}(U_0)$, and so $H\subset N_G(U_0)$. By \cite[Proposition 3.1]{BT}, there exists a parabolic subgroup $P\subset\GL_n$ such that $N_G(U_0)\subset N_G(P)$ and $U_0\subset R_u(P)$, where $R_u(P)$ is the unipotent radical of $P$. Now we have $H\subset N_G(P)$. If $U_0\ne\{1\}$, then $P\subset\GL_n$ is a proper subgroup, since $\GL_n$ is reductive. But this contradicts the assumption that $H$ is $G$-irreducible. We deduce that $B_0$ is a torus, and so $C_{\GL_n}(H_0)$, being reductive, must be a torus as well.
\end{Rem}

\subsection{Smoothness and Dimension}
Let $g$ be the genus of $X$. In the branched case, we have a $G$-conjugacy class $C_j\subset G\sigma$ for each $1\le j\le 2k$, and the $\GL_n\rtimes\lb\sigma\rb~$-character variety $\Ch_{\Gamma,\mathcal{C}}(X,G)$ is defined by
\begin{equation}
\{(A_i,B_i)_i(Y_j)_j\in \GL_n^{2g}\times \prod^{2k}_j C_j|\prod_{i=1}^g[A_i,B_i]\prod_{j}Y_j=1\}\ds \GL_n.
\end{equation}
In the unbranched case, we have a conjugacy class $C_j\subset G$ for each $1\le j\le k$, and the character variety $\Ch_{\Gamma,\mathcal{C}}(X,G)$ is defined by
\begin{equation}
\{(A_i,B_i)_i(X_j)_j\in \GL_n^{2g}\times \prod_{j=1}^{k} C_j\mid A_1\sigma(B_1)A_1^{-1}B_1^{-1}\prod_{i=2}^g[A_i,B_i]\prod_{j=1}^{k}X_j=1\}\ds\GL_n.
\end{equation}

\begin{Thm}
If the tuple of conjugacy classes $\mathcal{C}=(C_j)_j$ is generic, then the variety $\Ch_{\Gamma,\mathcal{C}}(X,G)$ above is smooth and its dimension is given by $$
\dim\Ch_{\Gamma,\mathcal{C}}(X,G)=(2g-2)\dim\GL_n+\sum_{\text{all }j}\dim C_j.$$
\end{Thm}
\begin{proof}
The proof is completely analogous to \cite[Theorem 2.1.5]{HLR} and is a combination of \cite[Theorem 2.2.5]{HRV} and \cite[Proposition 5.2.8]{EGO}. We only indicate the necessary modifications.

We first consider the branched case. We regard $\Rep_{\Gamma,\mathcal{C}}(X,G)$ as a subvariety of $\GL_n^{2g}\times\prod_jC_j$ and consider the map 
\begin{equation*}
\begin{split}
\mu:\GL_n^{2g}\times\prod_jC_j&\longrightarrow\GL_n\\
(A_i,B_i)_i(Y_j\sigma)_j&\longmapsto\prod_i[A_i,B_i]\prod_j(Y_j\sigma).
\end{split}
\end{equation*}
We will prove that its differential $\ud\mu$ is surjective onto $\mathfrak{gl}_n$ at any point of $\Rep_{\Gamma,\mathcal{C}}(X,G)$. Note that for the usual $\GL_n$-character variety, the image is always contained in $\mathfrak{sl}_n$. A tangent vector at $Y_j\sigma\in C_j$ can be written as $[P_j,Y_j\sigma]$ for some matrix $P_j\in\mathfrak{gl}_n$. In this expression, we understand that $\sigma P_j$ can be identified with $-P_j^t\sigma$ where $P_j^t$ means the transpose. Then for any tangent vector $(a_i,b_i)_i([P_j,Y_j\sigma])_j$ at a point $(A_i,B_i)_i(Y_j\sigma)_j$ of $\Rep_{\Gamma,\mathcal{C}}(X,G)$, we have
\begingroup
\allowdisplaybreaks
\begin{align}
\nonumber
&\ud\mu\left((a_i,b_i)_i([P_j,Y_j\sigma])_j\right)\\
\nonumber
=&\sum_1^g[A_1,B_1]\cdots[A_{i-1},B_{i-1}]a_iB_iA_i^{-1}B_i^{-1}[A_{i+1},B_{i+1}]\cdots[A_g,B_g]\times\prod_j^{2k}(Y_j\sigma)\\
\nonumber
&+\sum_1^g[A_1,B_1]\cdots[A_{i-1},B_{i-1}]A_ib_iA_i^{-1}B_i^{-1}[A_{i+1},B_{i+1}]\cdots[A_g,B_g]\times\prod_j^{2k}(Y_j\sigma)\\
\nonumber
&-\sum_1^g[A_1,B_1]\cdots[A_{i-1},B_{i-1}]A_iB_iA_i^{-1}a_iA_i^{-1}B_i^{-1}[A_{i+1},B_{i+1}]\cdots[A_g,B_g]\times\prod_j^{2k}(Y_j\sigma)\\
\nonumber
&-\sum_1^g[A_1,B_1]\cdots[A_{i-1},B_{i-1}]A_iB_iA_i^{-1}B_i^{-1}b_iB_i^{-1}[A_{i+1},B_{i+1}]\cdots[A_g,B_g]\times\prod_j^{2k}(Y_j\sigma)\\
\nonumber
&+\sum_{j=1}^{2k}\prod_{i=1}^g[A_i,B_i]Y_1\sigma\cdots Y_{j-1}\sigma[P_j,Y_j\sigma]Y_{j+1}\sigma\cdots Y_{2k}\sigma.
\end{align}
\endgroup
Now we take $Z\in\mathfrak{gl}_n$ and assume $\tr(Z\Ima\ud\mu)=0$. The arguments for usual character varieties show that $Z$ must commute with $A_i$, $B_i$, $Y_j\sigma$, $1\le i\le g$, $1\le j\le 2k$. 

Observe that $A_i$, $B_i$, $Y_j\sigma$, $1\le i\le g$, $1\le j\le 2k$, generate the image $H$ of $\pi_1(X)\rightarrow\GL_n\rtimes\lb\sigma\rb$, which is also generated by the image of $\pi_1(\tilde{X})$ and one extra element of $\pi_1(X)\setminus\pi_1(\tilde{X})$. Besides, by Proposition \ref{Gen->Irr}, $H$ is an irreducible subgroup. Now we are in the situation of Proposition \ref{irrsbgpGLsigma}.  Applying Remark \ref{LieirrsbgpGLsigma} to $H$, we deduce that $Z=0$. We conclude that $\ud\mu$ is surjective, and the dimension formula follows.

In the case of unbranched coverings, we define
\begin{equation*}
\begin{split}
\mu:\GL_n^{2g}\times\prod_jC_j&\longrightarrow\GL_n\\
(A_i,B_i)_i(X_j)_j&\longmapsto A_1\sigma(B_1)A_1^{-1}B_1^{-1}\prod_{i=2}^g[A_i,B_i]\prod_jX_j.
\end{split}
\end{equation*}
We will only explain how the vanishing of trace $\tr(Z\Ima\ud\mu)=0$ implies that $Z$ commutes with $A_1\sigma$. Recall that $\sigma(g)=Jg^{-t}J^{-1}$. Taking the derivatives with respect to $A_1$ and $B_1$, we get the following two summands of $\ud\mu$:
\begingroup
\allowdisplaybreaks
\begin{align*}
f(a)&=aA_1^{-1}-A_1\sigma(B_1)A_1^{-1}a\sigma(B_1)^{-1}A_1^{-1},\\
g(b)&=A_1\sigma(B_1)\sigma(b)A_1^{-1}-A_1\sigma(B_1)A_1^{-1}B_1^{-1}bA_1\sigma(B_1)^{-1}A_1^{-1},
\end{align*}
\endgroup
with $a$ and $b$ lying in $\gl_n$, and $\sigma(b):=-Jb^{t}J^{-1}$. From $\tr(Zf(a))=0$ for any $a$, we deduce that $Z$ commutes with $A_1\sigma(B_1)A_1^{-1}$. Then we calculate
\begingroup
\allowdisplaybreaks
\begin{align*}
&\tr(ZA_1\sigma(B_1)\sigma(b)A_1^{-1})\\
=&-\tr(ZA_1JB_1^{-t}b^tJ^{-1}A_1^{-1})\\
=&-\tr(A_1^{-t}J^{-1}bB_1^{-1}JA_1^tZ^t)\\
=&-\tr(B_1^{-1}JA_1^tZ^tA_1^{-t}J^{-1}b),
\end{align*}
\endgroup
and $$\tr(ZA_1\sigma(B_1)A_1^{-1}B_1^{-1}bA_1\sigma(B_1)^{-1}A_1^{-1})=\tr(ZB_1^{-1}b).$$ The vanishing of the trace $\tr(-B_1^{-1}JA_1^tZ^tA_1^{-t}J^{-1}b-ZB_1^{-1}b)$ for any $b$ implies $$B_1^{-1}\sigma(A_1)^{-1}\sigma(Z)\sigma(A_1)B_1=Z.$$ Together with the commutativity with $A_1\sigma(B_1)A_1^{-1}$, this shows that $Z$ commutes with $A_1\sigma$, i.e. $A_1\sigma(Z)A_1^{-1}=Z$. 
\end{proof}

\addtocontents{toc}{\protect\setcounter{tocdepth}{1}}
\bibliographystyle{alpha}
\bibliography{GammaG}
\end{document}